\definecolor{citegreen}{rgb}{0,0.6,0}
\definecolor{refred}{rgb}{0.8,0,0}
\newtheorem{thm}{Theorem}[section]
\newtheorem{lem}[thm]{Lemma}
\newtheorem{prop}[thm]{Proposition}
\newtheorem{cor}[thm]{Corollary}
\theoremstyle{definition}
\newtheorem{defn}[thm]{Definition}
\theoremstyle{remark}
\newtheorem{rem}[thm]{Remark}
\numberwithin{equation}{section}
\def\Ric{{\mathrm {Ric}}}
\def\R{\mathbb R}
\def\R{{{\mathbb R}}}
\def\NN{\mathbb N}
\newcommand{\intbar}{\etaathop{\int\etaakebox(-13.5,0){\rule[4pt]{.7em}{0.3pt}}
\kern-6pt}\nolimits}
\newcommand{\be}{\begin{equation}}
\newcommand{\ee}{\end{equation}}
\newcommand{\bea}{\begin{equation*}}
\newcommand{\eea}{\end{equation*}}
\title{Ancient solutions of superlinear heat equations on Riemannian manifolds}
\author{Daniele Castorina \footnote{Dipartimento di Matematica, Universit\`a di Roma Tor Vergata, Via della Ricerca Scientifica, 00133
Roma, Italy, castorin@mat.uniroma2.it} \and Carlo Mantegazza \footnote{Dipartimento di Matematica e Applicazioni, Universit\`a di Napoli, Via Cintia, Monte S. Angelo
80126 Napoli, Italy, c.mantegazza@sns.it}}
\begin{document}

\maketitle
\begin{abstract} We study some qualitative properties of ancient solutions of superlinear heat equations on a Riemannian manifold, with particular interest in positivity and constancy in space.
\end{abstract}

%\tableofcontents

\section{Introduction}

In this paper we continue the study started in~\cite{CaMa1} about some properties of solutions of semilinear heat equations on a complete and connected Riemannian manifold $(M,g)$ without boundary. We will consider the model equations $u_t = \Delta u + |u|^p$ with $p>1$, where $\Delta$ is the Laplace--Beltrami operator of $(M,g)$. 

\begin{defn} We call a solution of the equation $u_t = \Delta u + |u|^p$
\begin{itemize}
\item {\em ancient} if it is defined in $M\times(-\infty,T)$ for some $T\in\R$,

\item {\em immortal} if it is defined in $M\times(T,+\infty)$ for some $T\in\R$,

\item {\em eternal} if it is defined in $M\times\R$.
\end{itemize}
We call a solution $u$ {\em trivial} if it is constant in space, that is, $u(x,t)=u(t)$ and solves the ODE $u'=|u|^p$. We say that $u$ is simply {\em constant} if it is constant in space and time.
\end{defn}

Notice that positive ancient (or negative immortal) trivial solutions always exists (the problem reduces to solve the above ODE). 

A reason for the interest in ancient or eternal solutions is that they typically arise as blow--up limits when the solutions of semilinear parabolic equations (in bounded intervals) develop a singularity at a certain time $T \in \mathbb{R}$, i.e. the solution $u$ becomes unbounded as $t \to T^-$. 

Our main result is the following theorem.

\begin{thm}\label{main}
Let $(M,g)$ be a n--dimensional compact Riemannian manifold without boundary such that $\Ric>0$. Let $u$ be an ancient solution to the semilinear heat equation $u_t = \Delta u + |u|^p$, with $1<p< \frac{n(n+2)}{(n-1)^2}$. Then $u$ is trivial.
\end{thm}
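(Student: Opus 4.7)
The strategy is to adapt to the parabolic ancient setting an integral Bochner--Bernstein technique. The specific exponent $n(n+2)/(n-1)^2$ is of the type produced, in related elliptic problems, by combining the Bochner formula with the refined Kato inequality $|\nabla^2 u|^2 \geq \frac{n}{n-1}\bigl|\nabla|\nabla u|\bigr|^2$ together with a positive lower bound on $\Ric$, which gives a clear template for the proof.

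First I would reduce to the case $u>0$. The zero solution is trivial; for a nontrivial ancient solution, the observation $u_t - \Delta u = |u|^p \geq 0$ combined with the strong maximum principle applied in the past forces $u$ to be of one sign, and the sign can be taken positive by analyzing the extrema $M(t)=\max u(\cdot,t)$ and $m(t)=\min u(\cdot,t)$, whose ODE-type differential inequalities, together with the absence of backward blow-up implied by the ancient hypothesis, pin down the sign --- most likely invoking a result of this kind from~\cite{CaMa1}.

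Assuming $u>0$, I would then compute $(\partial_t-\Delta)F$ for a nonlinear density $F = |\nabla u|^2\, u^{-2\beta}$ using
\[
\Delta|\nabla u|^2 = 2|\nabla^2 u|^2 + 2\langle\nabla u,\nabla \Delta u\rangle + 2\Ric(\nabla u,\nabla u)
\]
together with $\nabla u_t = \nabla\Delta u + p\, u^{p-1}\nabla u$. Multiplying by a power $u^\gamma$ and integrating over $M$ (compactness removes any need for a spatial cutoff), together with the integral identity obtained by testing the original equation against $u^q$, produces an inequality of the schematic shape
\[
\frac{d}{dt}\int_M F\, u^\gamma\, dV_g + 2\lambda \int_M F\, u^\gamma\, dV_g \leq Q_{p,\beta,\gamma},
\]
where $\lambda>0$ satisfies $\Ric \geq \lambda g$ (available by compactness), and $Q_{p,\beta,\gamma}$ is a quadratic form in integrals of the type $\int_M u^\ast|\nabla u|^4$ and $\int_M u^\ast|\nabla u|^2$. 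Tuning $\beta$ and $\gamma$ so that the coefficient matrix of this quadratic form has nonpositive signature is the core algebraic step; using the refined Kato inequality, such a tuning becomes possible precisely when $p<n(n+2)/(n-1)^2$.

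The final step is to integrate in time from $-\infty$ to some $T_0<T$. The main obstacle is the behavior of the nonnegative energy $E(t) = \int_M F\, u^\gamma\,dV_g$ as $t\to-\infty$: one must ensure that the boundary contribution at $-\infty$ vanishes (or at least stays finite), so that the Gr\"onwall-type coercivity produced by the $-2\lambda E$ term forces $E\equiv 0$ on $(-\infty,T)$. This requires a priori control on $u$ coming from the ancient hypothesis --- essentially an upper bound via comparison with the ODE $u'=u^p$, whose blow-up in finite forward time forbids unbounded values far in the past, and possibly an auxiliary Li--Yau--Hamilton-type pointwise gradient estimate proved as an intermediate lemma. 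Resolving this asymptotic issue is the delicate point; once it is done, $E\equiv 0$ yields $\nabla u\equiv 0$, i.e.\ $u$ is trivial.
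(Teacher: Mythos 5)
Your reduction to $u>0$ is sound and matches the paper's positivity section. The main body of your argument, however, has a genuine gap at exactly the point you flag as ``the delicate point'': the a priori control of $u$ as $t\to-\infty$. Your proposed fix --- an upper bound ``via comparison with the ODE $u'=u^p$, whose blow-up in finite forward time forbids unbounded values far in the past'' --- does not work. At a spatial maximum one has $\Delta u\le 0$, so $M(t)=\max_M u(\cdot,t)$ satisfies $M'\le M^p$, and integrating this backward yields only a \emph{lower} bound on $M(t)$ tending to $0$; the forward blow-up argument applies to the \emph{minimum} $m(t)$ (where $m'\ge m^p$) and gives $m(t)\le\bigl[(p-1)(T-t)\bigr]^{-1/(p-1)}$, which says nothing about $\max_M u$ in the deep past. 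A priori an ancient solution could have $\max_M u(\cdot,t)\to+\infty$ as $t\to-\infty$, and ruling this out is the hardest step of the whole theorem. The paper does it via a universal a priori estimate in the style of Pol\'a\v cik--Quittner--Souplet, proved by a doubling-lemma blow-up argument that reduces matters to the nonexistence of positive bounded eternal solutions of $u_t=\Delta u+u^p$ on $\R^n$ --- and it is precisely there, not in any Bochner algebra on $M$, that the restriction $p<\frac{n(n+2)}{(n-1)^2}$ enters. Without that decay estimate your time integration from $-\infty$ has an uncontrolled boundary term and the Gr\"onwall coercivity cannot close.

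It is also worth noting that your route differs from the paper's even where it could in principle be made to work. The paper never runs an integral Bochner--Bernstein/refined-Kato computation on $M$: once $\max_M u(\cdot,t)\to 0$ as $t\to-\infty$ is known, it invokes a Souplet--Zhang type logarithmic gradient estimate, $\frac{|\nabla u|}{u}\le C\sqrt{\bigl(pD^{p-1}-(n-1)K\bigr)_+}\bigl(1+\log\frac{D}{u}\bigr)$, valid for ancient positive solutions bounded by $D$ on manifolds with $\Ric\ge K(n-1)$; as soon as $D\le\bigl[(n-1)K/p\bigr]^{1/(p-1)}$ the right-hand side vanishes and $\nabla u\equiv 0$ follows pointwise, with no need to tune exponents $\beta,\gamma$ or to check the signature of a quadratic form. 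Your integral scheme is in the spirit of Bidaut-V\'eron's proof of the underlying Euclidean Liouville theorem, but you have not carried out the algebra, and adapting it to the parabolic ancient setting is substantially more involved than the schematic inequality you write down. As it stands, the proposal is not a proof.
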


In Theorem 4.4 of~\cite{CaMa1} we proved triviality of ancient solutions for the case $p=2$ under a suitable growth assumption, hence, Theorem~\ref{main}, besides extending the conclusion to any exponent $1 < p < \frac{n(n+2)}{(n-1)^2}$, improves such result not requiring any growth assumptions on the solutions. This is achieved through a priori gradient and decay estimates of independent interest, which we discuss in Section~\ref{gradient}. Moreover, we underline that we do not assume the positivity of the solutions, but we just obtain it under the hypothesis of boundedness from below of the Ricci tensor (see Section~\ref{positivity}), thus, as a consequence, it is possible to improve results such as Theorem~1 in~\cite{quittner} or Corollary~1.6 in~\cite{merlezaag}.

In the Euclidean space, it is well known that nontrivial global radial (static) solutions on $\mathbb{R}^n \times \mathbb{R}$ exist for any supercritical exponent $p \geq \frac{n+2}{n-2}$. Conversely, while triviality of eternal {\em radial} solutions can be shown in the full range of subcritical exponents $1<p<\frac{n+2}{n-2}$, the same expected result for general (not necessarily radial) solutions is known only in the range $1<p< \frac{n(n+2)}{(n-1)^2}$ (as in Theorem~\ref{main} above), and it remains a challenging open problem when $\frac{n(n+2)}{(n-1)^2} \leq p< \frac{n+2}{n-2}$ (see~\cite{pqs1,pqs2}).

In Section~\ref{positivity} we show that the boundedness from below of the Ricci tensor of the manifold $(M,g)$ implies the positivity of ancient solutions. In Section~\ref{gradient} we obtain a universal a priori estimate which implies the decay at minus infinity of ancient solutions, as well as a gradient estimate of Li--Yau type. Finally, in Section~\ref{triviality}, we prove Theorem~\ref{main} as a corollary of a triviality result. 

\bigskip

{\em In all of the paper, the Riemannian manifolds $(M,g)$ will be smooth, complete, connected and without boundary. We will denote with $\Delta$ the associated Laplace--Beltrami operator. As it is standard, we will write $\Ric\geq \lambda$ with the meaning $\Ric\geq \lambda g$, that is, all the eigenvalues of the Ricci tensor are larger or equal than $\lambda\in\R$. Finally, all the solutions we will consider are classical, $C^2$ in space and $C^1$ in time.}

\section{Positivity}\label{positivity}

We start showing positivity of ancient solutions. We will also see that eternal solutions are trivial in the compact case.

\begin{prop}\label{p1}
Let $(M,g)$ be compact and $u$ an ancient solution of the equation $u_t = \Delta u + |u|^p$ with $p>1$ in $M\times (-\infty,T)$, for some $T\in\R$, then either $u\equiv 0$ or $u>0$ everywhere. 
\end{prop}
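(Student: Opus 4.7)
The plan is in two steps: an ODE comparison at the spatial minimum to rule out negative values of $u$, followed by the strong parabolic maximum principle.

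Since $M$ is compact, $m(t):=\min_{x\in M}u(x,t)$ is attained and locally Lipschitz on $(-\infty,T)$. By Hamilton's trick, at any spatial minimizer $x^{*}(t)$ one has $\Delta u(x^{*},t)\ge 0$, so
\[
\frac{d^{+}}{dt}m(t)\ge u_t(x^{*},t)=\Delta u(x^{*},t)+|u(x^{*},t)|^p\ge |m(t)|^p .
\]
In particular $m$ is non-decreasing. Arguing by contradiction, suppose $m(t_0)=c<0$ for some $t_0<T$. Then $m(t)\le c<0$ for every $t\le t_0$, and on $(-\infty,t_0]$ I may integrate $m'/|m|^p\ge 1$ using the antiderivative $\Phi(y):=\tfrac{1}{(p-1)|y|^{p-1}}$ of $1/|y|^p$ on $(-\infty,0)$, obtaining
\[
0<\Phi(m(t))\le \Phi(c)-(t_0-t).
\]
Letting $t$ decrease to $t_{*}:=t_0-\Phi(c)=t_0-\tfrac{1}{(p-1)|c|^{p-1}}$ forces $\Phi(m(t))\to 0^{+}$, hence $m(t)\to -\infty$. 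This contradicts the fact that $u$, being classical on the compact set $M\times[t_{*}-1,t_0]$, is bounded there. Therefore $u\ge 0$ throughout $M\times(-\infty,T)$.

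For the second step, $u\ge 0$ satisfies $u_t-\Delta u=|u|^p\ge 0$. If $u$ vanishes at some point $(x_0,t_0)$, then $(x_0,t_0)$ is an interior minimum and the strong parabolic maximum principle (on the boundaryless compact manifold $M$) forces $u\equiv 0$ on the parabolic past $M\times(-\infty,t_0]$. Standard forward uniqueness for the Cauchy problem of the semilinear equation (the nonlinearity $|u|^p$ is $C^1$ for $p>1$) then propagates $u\equiv 0$ up to time $T$. Otherwise $u>0$ everywhere, giving the stated dichotomy.

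The step I expect to be the most delicate is the backward blow-down of $m$: compactness of $M$ is essential to upgrade continuity of the classical solution into uniform boundedness on compact time slabs, and the superlinearity $p>1$ is exactly what ensures $\Phi(c)<+\infty$, that is, a finite blow-down time. Everything else is routine maximum-principle bookkeeping.
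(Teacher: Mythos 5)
Your proof is correct and follows essentially the same route as the paper: Hamilton's trick at the spatial minimum yields $m'(t)\ge |m(t)|^p$, backward integration rules out negative values of $u$ via finite-time blow-down to $-\infty$, and the strong maximum principle gives the dichotomy $u\equiv 0$ or $u>0$. Your explicit forward propagation of the zero set by uniqueness is a small extra detail the paper leaves implicit, but the argument is the same.
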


\begin{proof}
For every $t<T$, we define $x_t\in M$ as the point such that $u(x_t, t) = \min_{x\in M} u(x,t)$ and we set $v(t) = u(x_t, t)$, then, by maximum principle or more precisely, by Hamilton's trick (see~\cite{hamilton2} 
or~\cite[Lemma~2.1.3]{Manlib} for details), at almost every $t\in(-\infty,T)$ (precisely when $v'(t)$ exists -- notice that $v$ is locally Lipschitz, as $M$ is compact) there holds $v'(t) \geq\vert v(t)\vert^p$.

If $v(t_0)<0$ at some time $t_0\in(-\infty,T)$, then integrating the above differential inequality in intervals $[t_1,t_0]$, it is easy to see that, moving in the past, $v(t)$ goes to $-\infty$ in finite time, which is a contradiction. Thus, $v(t)\geq 0$ for every $t\in(-\infty,T)$, which implies $u\geq 0$ everywhere. Then, by strong maximum principle, either $u>0$ everywhere ($M$ is connected) or $u\equiv 0$ and we are done.
\end{proof}

\begin{cor}\label{coro1}
If $(M,g)$ is compact and $u$ is an eternal solution of $u_t = \Delta u + |u|^p$ with $p>1$ in $M\times\R$, then $u\equiv0$.
\end{cor}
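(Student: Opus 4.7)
The plan is to invoke Proposition~\ref{p1} to reduce to the strictly positive case, and then to run the ODE comparison from the proof of Proposition~\ref{p1} forward in time (instead of backward) to derive finite-time blow-up, contradicting eternity.

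More precisely, since an eternal solution is in particular an ancient solution defined on $M\times(-\infty,T)$ for every $T\in\R$, Proposition~\ref{p1} immediately gives the dichotomy $u\equiv 0$ or $u>0$ everywhere. Assume for contradiction that $u>0$ on $M\times\R$. Define $v(t)=\min_{x\in M}u(x,t)$; then $v$ is locally Lipschitz (compactness of $M$) and strictly positive, and by Hamilton's trick, at almost every $t\in\R$ we have
\begin{equation}
v'(t)\geq |v(t)|^p=v(t)^p.
\end{equation}

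The key (and only nontrivial) step is a standard forward ODE comparison. Fix any $t_0\in\R$; since $v(t_0)>0$, dividing by $v^p$ and integrating on $[t_0,t]$ yields
\begin{equation}
\frac{1}{(p-1)\,v(t_0)^{p-1}}-\frac{1}{(p-1)\,v(t)^{p-1}}\geq t-t_0,
\end{equation}
so in particular $\frac{1}{(p-1)\,v(t_0)^{p-1}}\geq t-t_0$ for all $t\geq t_0$ at which $v$ is defined. Taking $t>t_0+\frac{1}{(p-1)\,v(t_0)^{p-1}}$ gives a contradiction, since the eternal assumption forces $v$ to be finite there. Hence the case $u>0$ is impossible and $u\equiv 0$.

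I do not foresee a real obstacle: the whole argument is a two-line adaptation of the proof of Proposition~\ref{p1}, with time reversed. The only subtlety is to ensure that Hamilton's trick and the integration of $v'\ge v^p$ are valid almost everywhere under the mere local Lipschitz regularity of $v$, which is already used and justified in the proof of Proposition~\ref{p1}.
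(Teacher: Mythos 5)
Your proposal is correct and follows essentially the same route as the paper: reduce to $u>0$ via Proposition~\ref{p1}, then integrate the differential inequality $v'\geq v^p$ for $v(t)=\min_{x\in M}u(x,t)$ forward in time to force finite-time blow-up, contradicting eternity. The explicit integration of $1/v^{p-1}$ is just a spelled-out version of the paper's one-line ODE argument, and the regularity caveat (local Lipschitz continuity of $v$, Hamilton's trick) is handled exactly as in Proposition~\ref{p1}.
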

\begin{proof}
By the previous proposition, if $u\not\equiv0$, then $u$ is positive everywhere and (with the same notation of the previous theorem) if $v(t_0)>0$, for some $t_0\in\R$, by integrating the differential inequality $v'(t) \geq v^p (t)$, we see that $v(t)$ goes to $+\infty$ in finite time, against the hypothesis that $u$ is an eternal solution.
\end{proof}

\begin{rem}
With the same argument, we can see that an immortal nonnegative solution is identically zero.
\end{rem}

\begin{rem}
In the noncompact situation, the conclusion of Corollary~\ref{coro1} does not necessarily hold. Consider $M = \R^n$ and $u:\R^n\to\R$ given by a ``Talenti's  function'' (an extremal of Sobolev inequality, see~\cite{tal} and also~\cite{aubinli}), that is,
$$
u(x)=\left(\,\frac{n(n-2)}{n(n-2) + \vert x \vert^2}\right)^{\frac{n-2}{2}}\,,
$$
which, by a straightforward computation, satisfies $\Delta u + u^{\frac{n+2}{n-2}} = 0$ in $\R^n$, for $n\geq 3$. In particular, $u$ is a nonzero eternal (static) solution for the semilinear heat equation $u_t=\Delta u + |u|^p$, with $p = \frac{n+2}{n-2}$.
\end{rem}

We deal now with the noncompact case, following the technique of~\cite[Proposition~2.1]{chen2}. 

\begin{lem}\label{p2} Let the Ricci tensor of the $n$--dimensional Riemannian manifold $(M,g)$ be uniformly bounded below by $-K(n-1)$, with $K\geq 0$, and let $u$ be a solution of the equation $u_t = \Delta u + |u|^p$ with $p>1$ in $M\times[0,T)$. For any $0<\delta<1$, there is a constant $C_\delta>0$ such that, if $u\geq -L$, for some positive value $L\in\R$, in the ball $B_{Ar_0}(x_0)$ at $t=0$, with 
$$
A\geq 4+2(n-1)T/r_0^2+2(n-1)T\sqrt{K}/r_0,
$$
then,
$$
u(x,t)\geq \min\left\{ - 
\Bigl(({1-\delta})({p-1}) t+ L^{1-p}\Bigr)^{\frac{1}{1-p}},\, - \frac{C_\delta}{(A r_0)^{\frac{2}{p-1}}}\right\}\,
$$
for every $x\in B_{Ar_0/4}(x_0)$ and $t\in[0,T)$.
\end{lem}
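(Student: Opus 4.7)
The plan is to recast the claim as an upper bound. Setting $v := -u$, the function $v$ satisfies $v_t = \Delta v - |v|^p$ with $v(\cdot,0) \leq L$ on $B_{Ar_0}(x_0)$, and the conclusion becomes $v(x,t) \leq \max\{\varphi(t),\, C_\delta/(Ar_0)^{2/(p-1)}\}$ on $B_{Ar_0/4}(x_0)\times[0,T)$, where $\varphi(t) := [(1-\delta)(p-1)t + L^{1-p}]^{1/(1-p)}$ is the positive, decreasing ODE barrier with $\varphi' = -(1-\delta)\varphi^p$ and $\varphi(0) = L$. The role of the parameter $\delta$ is to leave some slack: $\varphi$ decays strictly slower than the sharp ODE solution, which creates room for a Young's inequality absorption later.

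The central trick is to work with the defect $w := (v - \varphi)_+$. On the open set $\{w > 0\}$ one has $v > \varphi > 0$, so $|v|^p = v^p$, and using the super-additivity $(a+b)^p \geq a^p + b^p$ valid for $a,b \geq 0$, $p > 1$, one gets $v^p - (1-\delta)\varphi^p \geq w^p + \delta\varphi^p$. Hence
\begin{equation*}
w_t \;=\; \Delta w - \bigl[v^p - (1-\delta)\varphi^p\bigr] \;\leq\; \Delta w - w^p \qquad \text{in } \{w > 0\}.
\end{equation*}
Subtracting the ODE barrier has converted the reactive term $-|v|^p$ into a genuine damping term on $w$, which is what drives a Keller--Osserman-type decay.

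Next, I localize with a smooth radial cutoff $\eta \in C^2(M,[0,1])$ supported in $B_{Ar_0/2}(x_0)$ and equal to $1$ on $B_{Ar_0/4}(x_0)$, arranged so that (by the Laplacian comparison theorem under $\Ric \geq -(n-1)K$) $|\nabla \eta|^2/\eta \leq C_n/(Ar_0)^2$ and $-\Delta \eta \leq C_n[1/(Ar_0)^2 + \sqrt{K}/(Ar_0)]$. Consider $F := \eta^\beta w$ for a suitable $\beta \geq p/(p-1)$. Since $w \equiv 0$ at $t=0$ on $B_{Ar_0}$ (as $v(\cdot,0) \leq L = \varphi(0)$) and outside $\operatorname{supp}\eta$, any positive supremum of $F$ on $M\times[0,T)$ is attained at a space-time interior point $(x^\ast, t^\ast)$ with $w(x^\ast,t^\ast) > 0$. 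There Hamilton's trick (as in Proposition~\ref{p1}) combined with the maximum principle gives $F_t \geq 0$, $\nabla F = 0$, $\Delta F \leq 0$. The first-order condition yields $\nabla w = -\beta w \nabla\eta/\eta$; substituting this into the identity $\eta^\beta \Delta w = \Delta F + \beta(\beta+1) w \eta^{\beta-2}|\nabla\eta|^2 - \beta w \eta^{\beta-1}\Delta \eta$ and using $w_t \leq \Delta w - w^p$ yields
\begin{equation*}
\eta^\beta w^p \;\leq\; \beta(\beta+1) w \eta^{\beta-2}|\nabla\eta|^2 + \beta w \eta^{\beta-1}|\Delta\eta|.
\end{equation*}
Applying Young's inequality with exponents $p$ and $p/(p-1)$ to each term on the right (absorbing a small multiple of $\eta^\beta w^p$ into the left using the $\delta$-slack) and using $\eta \leq 1$ yields $F^p(x^\ast, t^\ast) \leq C_\delta\bigl[1/(Ar_0)^{2p/(p-1)} + K^{p/(2(p-1))}/(Ar_0)^{p/(p-1)}\bigr]$. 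The size hypothesis on $A$, namely $Ar_0^2 \geq 2(n-1)T$ together with $Ar_0 \geq 2(n-1)T\sqrt{K}$, is exactly what collapses this to a constant multiple of $1/(Ar_0)^{2p/(p-1)}$, so $F \leq C_\delta/(Ar_0)^{2/(p-1)}$ everywhere on $M \times [0,T)$; since $\eta \equiv 1$ on $B_{Ar_0/4}$, this is the claimed bound on $w$, hence on $v$ and on $u$.

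The main obstacle I anticipate is the bookkeeping: picking $\beta$ so that (i) the Young's inequality absorption is strict using the slack from $\delta$, (ii) the residual powers of $\eta$ on the right-hand side are nonnegative (so that $\eta \leq 1$ discards them), and (iii) after the $p$-th root the scaling-invariant exponent $2/(p-1)$ emerges correctly. Intertwined with this is invoking the two size conditions on $A$ at precisely the right place to balance the Ricci-induced cutoff term $\sqrt{K}/(Ar_0)$ against the parabolic scale $1/(Ar_0)^2$.
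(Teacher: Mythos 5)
Your barrier-subtraction step is correct and elegant: with $v=-u$ and $\varphi(t)=\bigl[(1-\delta)(p-1)t+L^{1-p}\bigr]^{\frac{1}{1-p}}$ one indeed gets $w_t\leq \Delta w - w^p$ on $\{w>0\}$ for $w=(v-\varphi)_+$, and the $\eta^\beta$ Keller--Osserman localization with $\beta\geq p/(p-1)$ is sound up to the final bound $F^p(x^\ast,t^\ast)\leq C_\delta\bigl[(Ar_0)^{-\frac{2p}{p-1}}+K^{\frac{p}{2(p-1)}}(Ar_0)^{-\frac{p}{p-1}}\bigr]$. The gap is in the very last step: you claim the hypothesis on $A$ ``collapses'' the curvature term into $C(Ar_0)^{-\frac{2p}{p-1}}$. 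It does not, and cannot. Absorbing $K^{\frac{p}{2(p-1)}}(Ar_0)^{-\frac{p}{p-1}}$ into $C(Ar_0)^{-\frac{2p}{p-1}}$ would require an \emph{upper} bound of the form $\sqrt{K}\,Ar_0\leq C$, whereas the hypothesis $A\geq 4+2(n-1)T/r_0^2+2(n-1)T\sqrt{K}/r_0$ is a \emph{lower} bound on $A$; for fixed $K>0$ the quantity $\sqrt{K}\,Ar_0$ is unbounded as $A\to\infty$, and the curvature term then dominates $(Ar_0)^{-\frac{2p}{p-1}}$. This is structural, not bookkeeping: for any static radial cutoff transitioning over a length $\sim Ar_0$, the Laplacian comparison under $\Ric\geq-(n-1)K$ forces $(-\Delta\eta)_+\gtrsim \sqrt{K}/(Ar_0)$ on the transition annulus, so the extra term is unavoidable in your scheme. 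What you actually prove is $u\geq -\varphi(t)-C_\delta\bigl[(Ar_0)^{-\frac{2}{p-1}}+(\sqrt{K}/(Ar_0))^{\frac{1}{p-1}}\bigr]$ on $B_{Ar_0/4}(x_0)$ (note also that you obtain a sum rather than the stated minimum of the two quantities, a secondary but real discrepancy with the statement, since the barrier carries the same $L$ as the hypothesis). This weaker estimate still tends to the right limit as $A\to+\infty$ and would suffice for the positivity theorem downstream, but it is not the lemma as stated.

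The paper avoids this precisely by \emph{not} using a static cutoff: it works with the multiplicative quantity $w=u\psi$ where $\psi=\varphi\bigl(\bigl[d(x_0,x)+(\tfrac{n-1}{r_0}+(n-1)\sqrt{K})t\bigr]/(Ar_0)\bigr)$ drifts in time. Then $(\partial_t-\Delta)\psi$ produces the factor $\varphi'(\dots)\bigl[-\Delta d+\tfrac{n-1}{r_0}+(n-1)\sqrt{K}\bigr]/(Ar_0)$, which by Laplacian comparison (for $d\geq r_0$) is nonpositive, hence, multiplied by the negative value of $u$ at the minimum, contributes a term of the \emph{good} sign that is simply discarded --- the $\sqrt{K}$ never enters the final constant. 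The role of the hypothesis on $A$ is entirely different from what you assign to it: it guarantees that the drifting cutoff has not swept past the ball $B_{2r_0}(x_0)$ by time $T$ (the argument of $\varphi$ stays $\leq 1/2$ there), so that near minimum points inside $B_{2r_0}(x_0)$ --- where the comparison $\Delta d\leq \tfrac{n-1}{d}+(n-1)\sqrt{K}$ with $d\geq r_0$ is unavailable --- the cutoff is locally constant and the issue disappears. To repair your proof you would need to incorporate an analogous time-drift (or an exponential-in-time weight calibrated to $\sqrt{K}/(Ar_0)$) into $\eta$; as written, the key step fails.
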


\begin{proof}  By the {\em Laplacian comparison} theorem (see~\cite[Chapter~9, Section~3.3]{petersen1} and also~\cite{sheng1}), if $\Ric\geq -K(n-1)$, with $K\geq 0$, we have
\begin{equation}\label{est1}
-\Delta d(x_0,x)\geq -\frac{n-1}{d(x_0,x)}-(n-1)\sqrt{K}\geq-\frac{n-1}{r_0}-(n-1)\sqrt{K}
\end{equation}
whenever $d(x_0,x)\geq r_0$, {\em in the sense of support functions} (or {\em in the sense of viscosity}, see~\cite{crisli1} -- check also~\cite[Appendix~A]{manmasura} for comparison of the two notions), in particular, this inequality can be used in maximum principle arguments, see again~\cite[Chapter~9, Section~3]{petersen1}, for instance. Hence, 
$$
-\Delta d(x_0,x)+\frac{n-1}{r_0}+(n-1)\sqrt{K}\geq 0\,,
$$
for every $x\in\ M$ such that $d(x_0,x)\geq r_0$.

We consider the function $w(x,t)=u(x,t)\psi(x,t)$ with 
$$
\psi(x,t)=\varphi\biggl(\frac{d(x_0,x)+\bigl(\frac{n-1}{r_0}+(n-1)\sqrt{K}\,\bigr)t}{Ar_0}\biggr)\,,
$$
where $\varphi:\R\to\R$ is a smooth, nonnegative and nonincreasing
function such that $\varphi=1$ in $(-\infty, 3/4]$ and
$\varphi=0$ in $[1,+\infty)$. Moreover, we ask that $\varphi$, $-\varphi^\prime$ and $\varphi^{\prime\prime}$ are all positive in a small interval $(1-\varepsilon,1)\subset(3/4,1)$.

For every $t\in[0,T)$, we let $w_{\min}(t)=\min_{x\in M} w(x,t)$ which is well defined (by construction, $\varphi(s)$ is zero for $s\geq 1$). Moreover, $w_{\min}$ is locally Lipschitz, hence differentiable at almost every time $t\in[0,T)$. If $w_{\min}(t)<0$, the minimum of $w(\cdot,t)$ is achieved at some point $x_t\in B_{Ar_0}(x_0)$, with $\psi(x_t,t)>0$ and $u(x_t,t)<0$. Hence, in a space--time neighborhood of $(x_t,t)$ we have $\varphi(\dots)=\psi>0$, so we can compute
\begin{align}
\Bigl(\frac{\partial}{\partial t}-\Delta\Bigr)w=
&\,\psi\Bigl(\frac{\partial}{\partial t}-\Delta\Bigr)u+u\Bigl(\frac{\partial}{\partial t}-\Delta\Bigr)\psi-2\nabla\psi\nabla u\\
=&\,\psi |u|^p+
\varphi'(\dots)\frac{u}{Ar_0}\Bigl[-\Delta d(x_0,\cdot)+\frac{n-1}{r_0}+(n-1)\sqrt{K}\Bigr]\\
&\,-\varphi''(\dots)\frac{u}{A^2r_0^{2}} -2\nabla\psi\nabla u\\
=&\,\psi |u|^p+
\varphi'(\dots)\frac{u}{Ar_0}\Bigl[-\Delta d(x_0,\cdot)+\frac{n-1}{r_0}+(n-1)\sqrt{K}\Bigr]\\
&\,-\varphi''(\dots)\frac{u}{A^2r_0^{2}} -2\frac{\nabla\psi\nabla w}{\psi} +2u\frac{\vert\nabla\psi\vert^2}{\psi}\\
=&\,\psi |u|^p+
\varphi'(\dots)\frac{u}{Ar_0}\Bigl[-\Delta d(x_0,\cdot)+\frac{n-1}{r_0}+(n-1)\sqrt{K}\Bigr]\\ 
&\,-2\frac{\nabla\psi\nabla w}{\psi}+\frac{u}{A^2r_0^{2}}\Bigl(\frac{2[\varphi'(\dots)]^2}{\varphi(\dots)}-\varphi''(\dots)\Bigr)\,,\label{eqqq1}
\end{align}
at the smooth points of the function $\psi$, in particular, at the smooth points of the distance function  (that is, the points not belonging to the {\em cutlocus} of $x_0$). Notice that 
we used the fact that $\vert\nabla d(x_0,\cdot)\vert^2=1$.

If $x_t\in B_{2r_0}(x_0)$, we have
\begin{align*}
\frac{d(x_0,x_t)+\bigl(\frac{n-1}{r_0}+(n-1)\sqrt{K}\bigr)t}{Ar_0}
\leq&\,\frac{2r_0+\bigl(\frac{n-1}{r_0}+(n-1)\sqrt{K}\bigr)T}{Ar_0}\\
\leq&\,\frac{2r_0+{(n-1)T}/{r_0}+(n-1)\sqrt{K}\,T}{4r_0+2(n-1)T/r_0+2(n-1)\sqrt{K}\,T} = 1/2\,,
\end{align*}
by the hypothesis on $A$.\\
Thus, by the choice of $\varphi$, the function $\psi$ is locally constant equal to $1$ around $(x_t,t)$, hence smooth and  $\nabla\psi=\Delta\psi=\frac{\partial\psi}{\partial t}=0$. It follows, by the first line in computation~\eqref{eqqq1}, that in such case there holds
\begin{equation}\label{spec}
\Bigl(\frac{\partial}{\partial t}-\Delta\Bigr)w=\psi |u|^p\,,
\end{equation}
locally around $(x_t,t)$.

If instead $d(x_t,x_0)\geq 2r_0$, estimate~\eqref{est1} and formula~\eqref{eqqq1} hold in sense of support functions (or of viscosity), locally around $(x_t,t)$. Moreover, $\varphi'(\dots)u\geq0$, as $u$ is locally negative, that is, the factor in front of the second term in the right hand side of formula~\eqref{eqqq1} is nonnegative. Hence, locally we have
\begin{align*}
\Bigl(\frac{\partial}{\partial t}-\Delta\Bigr)w\geq&\,\psi |u|^p -2\frac{\nabla\psi\nabla w}{\psi}+\frac{u}{A^2r_0^{2}}\Bigl(\frac{2[\varphi'(\dots)]^2}{\varphi(\dots)}-\varphi''(\dots)\Bigr)\\
\geq&\,\psi |u|^p -2\frac{\nabla\psi\nabla w}{\psi}-\frac{\vert u\vert}{A^2r_0^{2}}\biggl\vert\frac{2[\varphi'(\dots)]^2}{\varphi(\dots)}-\varphi''(\dots)\biggr\vert\,.
\end{align*}
Notice that, by the argument leading to equation~\eqref{spec}, this conclusion holds also when $x_t\in B_{2r_0}(x_0)$, hence, independently of the position of the point $x_t$, for every time $t\in[0,T)$ such that $w_{\min}(t)<0$.

Then, by maximum principle (Hamilton's trick) and standard support functions (or viscosity) techniques, if $w^\prime_{\min}(t)$ exists, we get the inequality
$$
w^\prime_{\min}(t)\geq\varphi(\dots) |u|^p-\frac{\vert u\vert}{A^2r_0^{2}}\biggl\vert\frac{2[\varphi'(\dots)]^2}{\varphi(\dots)}-\varphi''(\dots)\biggr\vert\,,
$$
with the right hand side evaluated at $(x_t,t)$.\\
Now, it is not difficult to see that, by our assumptions on the function $\varphi:\R\to\R$, there exists a positive constant $C$ such that $\Bigl\vert\frac{2[\varphi']^{2}}{\varphi}-\varphi''\Bigr\vert\leq C \varphi^{1/p}$, as $p>1$. Hence, simplifying the notation, 
for any $\delta\in(0,1)$, there holds
\begin{align*}
w^\prime_{\min}\geq&\,\varphi|u|^p-\frac{C\vert u\vert \varphi^{1/p}}{A^2r_0^{2}}\\
\geq &\,\varphi |u|^p - \frac{\delta}{p}\varphi |u|^p - \frac{(p-1)\,C^{\frac{p}{p-1}}}{p \delta^{\frac{1}{p-1}}  A^{\frac{2p}{p-1}} {r_0}^{\frac{2p}{p-1}}}\\
=&\,\frac{\psi^p|u|^p}{\psi^{p-1}}\bigl(1-\delta/p\bigr) - \frac{C_p}{\delta^{\frac{1}{p-1}}  A^{\frac{2p}{p-1}} {r_0}^{\frac{2p}{p-1}}}\\
=&\,\frac{|w_{\min}|^{p}}{\psi^{p-1}} (1-\delta/p) - \frac{C_p}{\delta^{\frac{1}{p-1}} A^{\frac{2p}{p-1}} {r_0}^{\frac{2p}{p-1}}}\,,
\end{align*}
where we used Young inequality, for some positive constant $C_p$.\\
As $0<\psi(x_t,t)\leq 1$, when $w_{\min}(t)<0$, we get 
\begin{align}
w^\prime_{\min}
\geq &\,|w_{\min}|^{p} (1-\delta) +\frac{\delta(p-1)}{p}|w_{\min}|^{p}- \frac{C_p}{\delta^{\frac{1}{p-1}} A^{\frac{2p}{p-1}} {r_0}^{\frac{2p}{p-1}}}\\
\geq &\, |w_{\min}|^{p}(1-\delta)+\frac{\delta(p-1)}{p}\Bigl(|w_{\min}|^{p}-\frac{C_\delta^p}{A^{\frac{2p}{p-1}} {r_0}^{\frac{2p}{p-1}}}\Bigr)\label{est2}\,,
\end{align}
for some positive constant $C_\delta$, at almost every time $t\in[0,T)$ such that $w_{\min}(t)<0$. 

Resuming, for almost every $t\in[0,T)$, either $w_{\min}(t)$ is nonnegative or the differential inequality~\eqref{est2} holds, moreover $w_{\min}(0)\geq -L$, by hypothesis (recall that if $w_{\min}(0)<0$ any point of minimum of $w(\cdot,0)$ belongs to $B_{Ar_0}(x_0)$). By an easy ODE's argument, integrating such differential inequality, we conclude
$$
w_{\min}(t)\geq \min\biggl\{ - 
\Bigl(({1-\delta})({p-1}) t+ L^{1-p}\Bigr)^{\frac{1}{1-p}},\, - \frac{C_\delta}{(A r_0)^{\frac{2}{p-1}}}\biggr\}\,,
$$
for every $t\in[0,T)$. It follows that 
$$
u(x,t)=u(x,t)\psi(x,t)=w(x,t)\geq \min\biggl\{ - 
\Bigl(({1-\delta})({p-1}) t+ L^{1-p}\Bigr)^{\frac{1}{1-p}},\, - \frac{C_\delta}{(A r_0)^{\frac{2}{p-1}}}\biggr\}\,,
$$
for every $x\in B_{Ar_0/4}(x_0)$ and $t\in[0,T)$.
\end{proof}

As a consequence of this lemma, we prove the positivity of ancient solutions. Notice that we do not ask any bound on $u$.

\begin{thm}\label{p3} Let the Ricci tensor of $(M,g)$ be uniformly bounded below. If $u$ is an ancient solution of the equation $u_t = \Delta u + |u|^p$ with $p>1$ in $M\times(-\infty,T)$, then either $u\equiv 0$ or $u>0$ everywhere.
\end{thm}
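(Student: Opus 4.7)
The plan is to reduce to Lemma~\ref{p2} by sending the initial time of the application to $-\infty$, and then to invoke the parabolic strong maximum principle. Fix an arbitrary point $(x_0,t_0)\in M\times(-\infty,T)$; the goal is to show $u(x_0,t_0)\geq 0$, since this forces $u\geq 0$ on $M\times(-\infty,T)$, after which the strong maximum principle together with the connectedness of $M$ will give the desired dichotomy. Let $\Ric\geq -K(n-1)$, with $K\geq 0$, be the Ricci lower bound supplied by the hypothesis.

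For every $s<t_0$ I will apply Lemma~\ref{p2} on the time interval $[s,t_0+1)$ (translated so that $s$ becomes the origin of time), with a ball centred at $x_0$. Since $M$ is complete, closed balls are compact and $u(\cdot,s)$ is continuous, so there exists $L_s>0$ with $u(\cdot,s)\geq -L_s$ on the ball in question. The key point is to choose $r_0=r_0(s)$ and $A=A(s)$ meeting the admissibility condition of the lemma (with the constant $T$ there replaced by $t_0+1-s$), and such that both terms appearing in the resulting lower bound tend to $0$ as $s\to-\infty$. A convenient choice is
\[
r_0(s)=\sqrt{t_0+1-s},\qquad A(s)=4+2(n-1)+2(n-1)\sqrt{K(t_0+1-s)},
\]
for which the admissibility condition is satisfied and $A(s)\,r_0(s)$ grows linearly in $t_0-s$.

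With these choices, as $s\to-\infty$ the cutoff error term $-C_\delta/(A\,r_0)^{2/(p-1)}$ tends to $0^-$; the other term has base $(1-\delta)(p-1)(t_0-s)+L_s^{1-p}\to+\infty$, since $L_s^{1-p}\geq 0$, so raised to the negative exponent $1/(1-p)$ it also produces a quantity tending to $0^-$. Since $x_0\in B_{Ar_0/4}(x_0)$ trivially, Lemma~\ref{p2} yields $u(x_0,t_0)\geq \eta(s)$ with $\eta(s)\to 0^-$; hence $u(x_0,t_0)\geq 0$, and as $(x_0,t_0)$ was arbitrary, $u\geq 0$ on $M\times(-\infty,T)$.

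Finally, since $u\geq 0$ solves $u_t-\Delta u=u^p\geq 0$, it is a nonnegative supersolution of the heat equation, and the parabolic strong maximum principle, applied locally in charts together with the connectedness of $M$, rules out an interior zero of $u$ unless $u\equiv 0$ on some past half-strip; uniqueness for the Cauchy problem with vanishing initial data then extends the zero to all times. The main technical hurdle is precisely the joint calibration of $r_0$ and $A$ with $s$: because the admissibility condition depends only linearly on the time-interval length, one can inflate the ball fast enough to annihilate the cutoff error, while the initial lower bound $L_s$ enters the estimate only through $L_s^{1-p}\geq 0$ and thus does no damage, no matter how badly $L_s$ blows up as $s\to-\infty$.
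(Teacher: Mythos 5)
Your proof is correct and follows essentially the same route as the paper: apply Lemma~\ref{p2} with initial time sent to $-\infty$, observe that the $L_s^{1-p}$ contribution is harmless and that the cutoff error $-C_\delta/(Ar_0)^{2/(p-1)}$ vanishes as the ball inflates, deduce $u\geq 0$, and finish with the strong maximum principle. The only cosmetic differences are that you calibrate $r_0$ and $A$ with $s$ and take one simultaneous limit where the paper sends $L$, then $A$, then the initial time to infinity in succession (and you should replace $[s,t_0+1)$ by $[s,\min\{t_0+1,T\})$ so that $u$ is actually defined on the interval where the lemma is applied).
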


\begin{proof} We only need to show that $u\geq 0$ everywhere, then the conclusion will follow by the strong maximum principle, as before.

Let the Ricci tensor of $(M,g)$ be bounded below by $-K(n-1)$, for some $K\geq 0$. Since the estimate in the previous lemma is invariant by translation in time, for every $m\in\NN$, we can consider the interval $[-m,T)$, for any $m>-T$, and conclude that
$$
u(x,t)\geq \min\left\{ - 
\Bigl(({1-\delta})({p-1}) (t+m)+ L^{1-p}\Bigr)^{\frac{1}{1-p}},\, - \frac{C_\delta}{(A r_0)^{\frac{2}{p-1}}}\right\}\,,
$$
for every $x\in B_{Ar_0/4}(x_0)$ and $t\in[-m,T)$, with $-L\leq\inf_{x\in B_{Ar_0}(x_0)} u(x,-m)$ and 
$$
A\geq 4+2(n-1)(T+m)/r_0^2+2(n-1)(T+m)\sqrt{K}/r_0\,.
$$
In particular, for every $t\in[-m+1,T)$ and $x\in B_{Ar_0/4}(x_0)$, sending $L$ to $+\infty$, we get
$$
u(x,t)\geq \min\left\{ - 
\Bigl(({1-\delta})({p-1})(t+m)\Bigr)^{\frac{1}{1-p}},\, - \frac{C_\delta}{(A r_0)^{\frac{2}{p-1}}}\right\}\,.
$$
Sending now $A\to+\infty$, we have that for every $t\in[-m+1,T)$ and $x\in M$ there holds
$$
u(x,t)\geq -\Bigl(({1-\delta})({p-1})(t+m)\Bigr)^{\frac{1}{1-p}}\,,
$$
for every $m\in\NN$, large enough. Sending finally $m\to+\infty$, we conclude that $u\geq 0$ everywhere.
\end{proof}

\begin{cor}\label{p4} Let the Ricci tensor of $(M,g)$ be uniformly bounded below. If $u$ is a solution of the semilinear elliptic equation $\Delta u + |u|^p=0$ with $p>1$ in $M$, then either $u\equiv 0$ or $u>0$ everywhere.
\end{cor}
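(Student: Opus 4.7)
The plan is to deduce the corollary essentially for free from Theorem~\ref{p3}, by the standard device of viewing a static solution of the elliptic equation as an eternal (hence ancient) solution of the associated parabolic equation. Concretely, given $u:M\to\R$ with $\Delta u+|u|^p=0$, define $v:M\times\R\to\R$ by $v(x,t)=u(x)$. Since $v_t\equiv 0$ and $\Delta v(x,t)+|v(x,t)|^p=\Delta u(x)+|u(x)|^p=0=v_t(x,t)$, the function $v$ is a classical solution of $v_t=\Delta v+|v|^p$ on all of $M\times\R$, and in particular it is an ancient solution in the sense of the definition of the introduction.

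Under the standing hypothesis that $\Ric$ is uniformly bounded below on $(M,g)$, Theorem~\ref{p3} applies to $v$ and yields the dichotomy: either $v\equiv 0$, or $v>0$ everywhere on $M\times\R$. Translating back through $v(x,t)=u(x)$, this is precisely the statement that either $u\equiv 0$, or $u>0$ everywhere on $M$, which is the desired conclusion.

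There is really no obstacle here: all the analytical work (Laplacian comparison, localized cut-off barrier, Hamilton's trick, the ODE integration) is already packaged into Lemma~\ref{p2} and Theorem~\ref{p3}, and the corollary only requires the observation that any elliptic solution gives rise to a time-independent parabolic solution defined on the entire real line. It is worth emphasizing that no a priori sign, bound, or growth assumption on $u$ is needed, since Theorem~\ref{p3} itself is free of such assumptions; the only structural requirement transmitted to the elliptic problem is the uniform lower bound on the Ricci tensor.
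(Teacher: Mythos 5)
Your proof is correct and is exactly the intended argument: the paper states Corollary~\ref{p4} without proof precisely because a static solution of $\Delta u+|u|^p=0$ is an eternal, hence ancient, solution of $u_t=\Delta u+|u|^p$, to which Theorem~\ref{p3} applies directly. Nothing is missing.
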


\section{Gradient and decay estimates}\label{gradient}

We now show a gradient estimate for positive solutions of the semilinear heat equation $u_t = \Delta u +|u|^p$ on manifolds with nonnegative Ricci tensor. A similar result for the classical heat equation has been proven by Souplet and Zhang in~\cite{souzha}. We will then apply this estimate in order to obtain the triviality of ancient solutions of $u_t = \Delta u + |u|^p$  under some hypotheses. 

\begin{lem}\label{szsem}
Let $(M,g)$ be an $n$--dimensional Riemannian manifold with $\Ric\geq K(n-1)$, for some $K\geq 0$ and let $p>1$. Let $u$ be a positive solution of the semilinear heat equation $u_t = \Delta u + |u|^p$ in $Q_{R,T} = B(x_0,R) \times [T_0 - T, T_0]$, with $B(x_0,R)$ the geodesic ball centered at $x_0$ of radius $R$ in $M$. Assume that $u \leq D$ in $Q_{R,T}$. Then, there exists a constant $C=C(n,p)$ such that on $Q_{R/2,T/4}$ there holds
\begin{equation}\label{liyausem}
\frac{|\nabla u(x,t)|}{u(x,t)} \leq C \left( \frac{1}{R} + \frac{1}{\sqrt{T}} + \sqrt{\bigl(pD^{p-1}-(n-1)K\bigr)_+} \right) \left( 1 + \log \frac{D}{u(x,t)} \right),
\end{equation}
where $\bigl(pD^{p-1}-(n-1)K\bigr)_+$ denotes $\max\,\bigl\{pD^{p-1}-(n-1)K,0\bigr\}$.
\end{lem}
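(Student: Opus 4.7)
The strategy is to adapt Souplet--Zhang's gradient estimate for the linear heat equation~\cite{souzha} to the semilinear setting. Since $u$ is positive and bounded above by $D$ on $Q_{R,T}$, set $f := \log(u/D) \leq 0$ and $v := 1 - f \geq 1$; the equation becomes
\begin{equation*}
f_t = \Delta f + |\nabla f|^2 + D^{p-1} e^{(p-1) f}.
\end{equation*}
Since $|\nabla f| = |\nabla u|/u$ and $v = 1 + \log(D/u)$, the target \eqref{liyausem} is equivalent to the pointwise bound $w := |\nabla f|^2/v^2 \leq C\bigl(1/R^2 + 1/T + (pD^{p-1} - (n-1)K)_+\bigr)$ on $Q_{R/2,T/4}$.

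First I would compute $(\partial_t - \Delta) w$ via the Bochner formula and the evolution of $|\nabla f|^2$. Using $\Ric \geq (n-1)K$, the bound $\Phi := D^{p-1} e^{(p-1)f} \leq D^{p-1}$, the inequality $p - 1 + 1/v \leq p$, and the identity $\nabla w = v^{-2} \nabla|\nabla f|^2 + 2 v^{-3}|\nabla f|^2 \nabla f$, a careful rearrangement produces
\begin{equation*}
(\partial_t - \Delta) w \leq \Bigl(2 - \tfrac{4}{v}\Bigr)\langle \nabla w, \nabla f\rangle - \tfrac{2|\nabla^2 f|^2}{v^2} + 2\bigl[pD^{p-1} - (n-1)K\bigr]_+ w + 2 w^2 (1 - v).
\end{equation*}
The last term, nonpositive since $v \geq 1$, encodes the sign $f = 1 - v \leq 0$ and is the key algebraic gain over the linear case. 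Combined with the Kato-type refinement $|\nabla^2 f|^2 \geq |\nabla|\nabla f|^2|^2/(4|\nabla f|^2)$, it will supply the quadratic dissipation needed to close the argument.

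Next I would introduce a smooth space--time cutoff $\eta(x,t) = \psi(d(x,x_0)/R)\chi((t - T_0 + T)/T)$, identically $1$ on $Q_{R/2,T/4}$ and vanishing near the parabolic boundary of $Q_{R,T}$, satisfying $|\nabla\eta|^4/\eta^3 \leq C/R^4$, $\eta_t^2/\eta \leq C/T^2$, and $\Delta\eta \geq -C/R^2$ in the sense of support functions (the Laplacian comparison theorem and Calabi's trick, invoked as in Lemma~\ref{p2}, handle the cut locus since $\Ric \geq (n-1)K \geq 0$). Apply Hamilton's trick to $\max_{Q_{R,T}} \eta w$: if the maximum is positive and attained at $(x_1, t_1)$, then $\nabla w = -w\nabla\eta/\eta$ and $(\partial_t - \Delta)(\eta w) \geq 0$ there. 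Substituting $\nabla w$ into $\nabla|\nabla f|^2 = v^2 \nabla w - 2v^{-1}|\nabla f|^2 \nabla f$ and applying the Kato inequality yields $-2|\nabla^2 f|^2/v^2 \leq -2 w^2$ plus cross terms in $\nabla\eta$; adding the contribution $2 w^2 (1 - v) = -2 w^2 (v - 1)$ produces the enhanced dissipation $-2 v w^2$. Plugging everything into $\eta(\partial_t - \Delta) w + w(\eta_t - \Delta\eta) + 2 w |\nabla\eta|^2/\eta \geq 0$ and using $|2 - 4/v|, |2 - 2/v| \leq 2$ gives
\begin{equation*}
2 v \eta w^2 \leq 2 v\, w^{3/2}|\nabla\eta| + \tfrac{3}{2}\, w\,\tfrac{|\nabla\eta|^2}{\eta} + w|\eta_t - \Delta\eta| + 2 \eta\,(pD^{p-1} - (n-1)K)_+\, w
\end{equation*}
at $(x_1, t_1)$.

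Dividing through by $v \geq 1$ removes the lingering $v$'s on the right-hand side, after which Young's inequality absorbs each cross term into $\varepsilon \eta w^2$ plus a forcing controlled by $|\nabla\eta|^4/\eta^3$, $(\eta_t - \Delta\eta)^2/\eta$, and $(pD^{p-1} - (n-1)K)_+^2$. By the cutoff bounds, this yields $\eta w \leq C\bigl(1/R^2 + 1/T + (pD^{p-1} - (n-1)K)_+\bigr)$ at $(x_1, t_1)$ and hence on all of $Q_{R,T}$; restricting to $Q_{R/2, T/4}$ (where $\eta \equiv 1$) and recalling $|\nabla f| = |\nabla u|/u$ and $v = 1 + \log(D/u)$ gives \eqref{liyausem}. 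The main obstacle is the Bochner bookkeeping in the second step: one must identify the term $2 w^2 (1 - v)$ coming from $f \leq 0$ and combine it with the Kato estimate to obtain the enhanced $-2 v w^2$ dissipation; without this extra factor of $v$, the division by $v$ that cleans the right-hand side would not be available, and Young's inequality would leave a $(1 + \log(D/u))$-weighted forcing that spoils the sharp bound. Once this key observation is in place, the remaining ingredients---the Souplet--Zhang cutoff, the Calabi trick at the cut locus, and the Young absorption---are routine.
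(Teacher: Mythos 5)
Your proposal is correct and follows essentially the same route as the paper's proof: the Souplet--Zhang substitution $f=\log(u/D)$, $w=|\nabla f|^2/(1-f)^2$, the evolution inequality with the enhanced dissipation $-2(1-f)w^2$ coming from $f\leq 0$ and the bound $\bigl[(p-1)+\tfrac{1}{1-f}\bigr]e^{(p-1)f}\leq p$, followed by a Li--Yau cutoff and a maximum-point argument closed with Young's inequality. The only cosmetic difference is that you keep the Hessian term $-2|\nabla^2 f|^2/v^2$ and recover the missing $-2w^2$ via Kato's inequality at the maximum point, whereas the paper completes the square $-\frac{2}{(1-f)^2}\bigl(f_{ij}+\frac{f_if_j}{1-f}\bigr)^2\leq 0$ directly in the evolution inequality; both regroupings are equivalent.
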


\begin{proof}
Let us define
\begin{equation}\label{few}
f= \log \left( \frac{u}{D} \right), \qquad w= \frac{|\nabla f|^2}{(1-f)^2}.
\end{equation}  
Thanks to the semilinear heat equation we easily see that
\begin{equation}\label{eqf}
f_t = \Delta f + |\nabla f|^2 + (D e^{f})^{p-1},
\end{equation}
then, in an orthonormal basis, we have
\begin{align*}
w_t =&\, \frac{2 \nabla f \nabla f_t}{(1-f)^2} + \frac{2 |\nabla f|^2 f_t}{(1-f)^3}\\
=&\,\frac{2 \nabla f \nabla (\Delta f + |\nabla f|^2 + (D e^f)^{p-1})}{(1-f)^2} + \frac{2 |\nabla f|^2 (\Delta f + |\nabla f|^2 + (D e^f)^{p-1})}{(1-f)^3}\\
=&\,\frac{2 \nabla f \nabla (\Delta f + |\nabla f|^2)}{(1-f)^2} + \frac{2 |\nabla f|^2 (\Delta f + |\nabla f|^2)}{(1-f)^3} + \frac{2 |\nabla f|^2 (D e^f)^{p-1}}{(1-f)^2} \left[ (p-1) + \frac{1}{1-f}\right]\\
=&\,\frac{2 f_{jii}f_j + 4 f_i f_j f_{ij}}{(1-f)^2} + \frac{2 f_{i}^{2} f_{jj} + 2|\nabla f|^4}{(1-f)^3} + \frac{2 |\nabla f|^2 (D e^f)^{p-1}}{(1-f)^2} \left[ (p-1) + \frac{1}{1-f} \right]\\
=&\,\frac{2 f_{iij}f_j -2\Ric_{ij}f_if_j+ 4 f_i f_j f_{ij}}{(1-f)^2} + \frac{2 f_{i}^{2} f_{jj} + 2|\nabla f|^4}{(1-f)^3} + \frac{2 |\nabla f|^2 (D e^f)^{p-1}}{(1-f)^2} \left[ (p-1) + \frac{1}{1-f} \right],
\end{align*}
where we interchanged derivatives (hence, there is an ``extra'' error term given by the Ricci tensor), passing from the fourth to the fifth line and we used the usual convention of summing on repeated indexes.\\
Now,
\begin{equation}\label{sz2}
\nabla_jw =\nabla_j \left( \frac{f_i^2}{(1-f)^2} \right)  =
\frac{2 f_i f_{ji}}{(1-f)^2} + \frac{2f_{i}^{2} f_{j}}{(1-f)^3}
\end{equation}
and
\begin{equation}\label{sz3}
\Delta w = \frac{2 f_{ij}^2 }{(1-f)^2} + \frac{2 f_i f_{jji}}{(1-f)^2} + \frac{8 f_i f_{ij} f_j }{(1-f)^3} + \frac{2f_{i}^{2} f_{jj}}{(1-f)^3} 
+ \frac{6f_{i}^{2} f_{j}^{2}}{(1-f)^4}.
\end{equation}
Hence, we get
\begin{align*}
w_t -\Delta w 
=&\,\frac{2 f_j f_{iij} -2\Ric_{ij}f_if_j+ 4 f_i f_{ij}f_j}{(1-f)^2} + \frac{2 f_{i}^{2} f_{jj} + 2|\nabla f|^4}{(1-f)^3} \\
&\,+ \frac{2 |\nabla f|^2 (D e^f)^{p-1}}{(1-f)^2}\left[ (p-1) + \frac{1}{1-f} \right]\\
&\,-\frac{2 f_{ij}^2 }{(1-f)^2} - \frac{2 f_i f_{jji}}{(1-f)^2} - \frac{8 f_i f_{ij} f_j }{(1-f)^3} - \frac{2f_{i}^{2} f_{jj}}{(1-f)^3} 
- \frac{6 f_{i}^{2} f_{j}^{2}}{(1-f)^4}\\
=&\,\frac{4 f_j f_{ij}f_j-2\Ric_{ij}f_if_j}{(1-f)^2} + \frac{2|\nabla f|^4}{(1-f)^3} + \frac{2 |\nabla f|^2 (D e^f)^{p-1}}{(1-f)^2} \left[ (p-1) + \frac{1}{1-f} \right]\\
&\,-\frac{2 f_{ij}^2 }{(1-f)^2}  - \frac{8 f_i f_{ij} f_j }{(1-f)^3} - \frac{6 f_{i}^{2} f_{j}^{2}}{(1-f)^4}.
\end{align*}
As by hypothesis, $f \leq 0$, we have 
$$
\left[ (p-1) + \frac{1}{1-f} \right]e^{(p-1) f} \leq p,
$$
hence, since $\Ric_{ij} f_i f_j \geq K(n-1) |\nabla f|^2$, we get
\begin{align*}
w_t -\Delta w
&\,\leq \frac{4 f_i f_{ij}f_j}{(1-f)^2} + \frac{2|\nabla f|^4}{(1-f)^3} + \frac{2\bigl(pD^{p-1}-(n-1)K\bigr)|\nabla f|^2}{(1-f)^2}\\
&\,-\frac{2 f_{ij}^2 }{(1-f)^2}  - \frac{8 f_i f_{ij} f_j }{(1-f)^3} - \frac{6 \vert\nabla f\vert^4}{(1-f)^4}.
\end{align*}
For the sake of simplicity let us set $L=\bigl(pD^{p-1}-(n-1)K\bigr)$.
Notice that by~\eqref{sz2}, there holds
$$
\langle\nabla f\,\vert\, \nabla w\rangle = \frac{2 f_i f_{ij} f_j }{(1-f)^2} + \frac{2\vert\nabla f\vert^{4} }{(1-f)^3},
$$
hence, substituting, we get
\begin{align}
w_t -\Delta w\leq 
&\,2\langle\nabla f\,\vert\, \nabla w\rangle -\frac{2\vert\nabla f\vert^{4} }{(1-f)^3} + \frac{2L|\nabla f|^2}{(1-f)^2} -\frac{2 f_{ij}^2 }{(1-f)^2}  - \frac{8 f_i f_{ij} f_j }{(1-f)^3} - \frac{6 \vert\nabla f\vert^4}{(1-f)^4}\nonumber\\
=&\, \left(2 -\frac{2}{1-f} \right) \langle\nabla f\,\vert\, \nabla w\rangle+ \frac{2L|\nabla f|^2}{(1-f)^2}-\frac{2\vert\nabla f\vert^{4} }{(1-f)^3} -\frac{2 f_{ij}^2 }{(1-f)^2}- \frac{4 f_i f_{ij} f_j }{(1-f)^3} - \frac{2 \vert\nabla f\vert^4}{(1-f)^4}\nonumber\\
=&\,-\frac{2f}{1-f}\langle\nabla f\,\vert\, \nabla w\rangle+ \frac{2L|\nabla f|^2}{(1-f)^2} -\frac{2\vert\nabla f\vert^{4} }{(1-f)^3}-\frac{2}{(1-f)^2}\left(f_{ij}+\frac{f_if_j}{1-f}\right)^2\nonumber\\
\leq&\,-\frac{2f}{1-f}\langle\nabla f\,\vert\, \nabla w\rangle+ 2Lw-2(1-f)w^2.\label{sz7}
\end{align}

We introduce the following cut--off functions (of Li and Yau~\cite{liyau}). Let $\psi$ be a smooth function supported in $Q_{R,T}$ with the following properties:
\begin{enumerate}
\item $\psi(x,t)=\varphi(d^M(x_0,x),t)\in[0,1]$ with $\varphi(r,t) \equiv 1$ if $r\leq R/2$ and $T_0 -T/4 \leq t \leq T_0$,
\item $\varphi$ is nonincreasing in the space variable $r$,
\item $|\nabla\psi|/\psi^a=|\partial_r \varphi|/\varphi^a \leq C_a /R$ and $|\partial^2_{rr} \varphi|/\varphi^a \leq C_a /R^2$, when $0<a<1$,
\item $|\partial_t \psi|/\psi^{1/2} \leq C / T$,
\end{enumerate}
for some constants $C$, $C_a$ independent of $R$ and $T$.

Then, by inequality~\eqref{sz7} with a straightforward calculation, setting $b = - \frac{2 f}{1-f} \nabla f$ one has
\begin{equation}\label{sz8}
\begin{split}
\Delta (\psi w) &+ \langle b\,\vert\, \nabla (\psi w)\rangle - 2 \Bigl\langle\frac{\nabla \psi}{\psi}\,\Bigr\vert\,\nabla (\psi w)\Bigr\rangle - (\psi w)_t\\
&\geq 2 \psi (1-f) w^2 + \langle b \,\vert\,\nabla \psi\rangle w  - 2 \frac{|\nabla \psi|^2}{\psi} w + w\Delta \psi - \psi_t w - 2L w \psi. 
\end{split}
\end{equation}

Suppose that the positive maximum of $\psi w$ is reached at some point $(x_1,t_1)\in Q_{R,T}$, which cannot be on the boundary where $\psi=0$. Arguing again (as in Lemma~\ref{p2}) in the sense of support function, if necessary, at such maximum point there holds $\Delta (\psi w) \leq 0$, $(\psi w)_t = 0$ and $\nabla (\psi w) = 0$,  hence
\begin{equation}\label{sz9}
2 \psi (1-f) w^2(x_1,t_1)\leq-\left[ \langle b\,\vert\, \nabla \psi\rangle w  - 2 \frac{|\nabla \psi|^2}{\psi} w + (\Delta \psi) w - \psi_t w - 2L w \psi \right] (x_1,t_1).
\end{equation}

We now estimate each term on the right hand side. For the first term we have,
\begin{align}
|\langle b\,\vert\, \nabla \psi\rangle w | 
\leq&\, \frac{2 |f|}{1-f} |\nabla f|\,|\nabla \psi|w\\
=&\, 2 w^{3/2} |f| |\nabla \psi|\\
=&\, 2 [(1-f)\psi w^2]^{3/4} \frac{|f| |\nabla \psi|}{[(1-f)\psi]^{3/4}}\\
\leq&\, (1-f)\psi w^2 + C \frac{(f |\nabla \psi|)^4}{[(1-f)\psi]^{3}}\\
\label{sz10}\leq&\,(1-f) \psi w^2 + C \frac{f^4}{R^4 (1-f)^{3}},
\end{align}
by the properties of the function $\psi$.\\
For the second term,
\begin{equation}\label{sz11}
\frac{|\nabla \psi|^2}{\psi} w = \psi^{1/2}\frac{|\nabla \psi|^2}{\psi^{3/2}}w \leq \frac18 \psi w^2 + C \left( \frac{|\nabla \psi|^2}{\psi^{3/2}} \right)^2 \leq \frac18 \psi w^2 + \frac{C}{R^{4}}.
\end{equation}
Thanks to the assumption on the nonnegative Ricci curvature, by the {\em Laplacian comparison} theorem (see formula~\eqref{est1}), we have
\begin{align}
- (\Delta \psi) w 
\leq&\, - \left[\partial^2_{rr} \varphi + \frac{n-1}{r}\partial_r \varphi \right]w\\
\leq&\, \left[|\partial^2_{rr} \varphi| + \frac{2(n-1)}{R}|\partial_r \varphi|\right] w\\
\leq&\, \varphi^{1/2} w \left(\frac{|\partial^2_{rr} \varphi|}{\varphi^{1/2}} + 2 (n-1) \frac{|\partial_r \varphi|}{R \varphi^{1/2}} \right)\\
\leq&\, \frac18 \varphi w^2 + C \left( \left[ \frac{|\partial^2_{rr} \varphi|}{\varphi^{1/2}} \right]^2 + \left[ \frac{|\partial_r \varphi|}{R \varphi^{1/2}} \right]^2 \right)\\
\label{sz12}\leq&\, \frac18 \psi w^2 + \frac{C}{R^{4}},
\end{align}
by the properties of the functions $\varphi$ (we recall that $\partial_r\varphi\leq 0$) and $\psi$.\\
Now we estimate $|\psi_{t}| w$ as
\begin{equation}\label{sz13}
|\psi_{t}| w = \psi^{1/2}\frac{|\psi_{t}|}{\psi^{1/2}}w \leq \frac18 \psi w^2 + C \left( \frac{|\psi_{t}|}{\psi^{1/2}} \right)^2\leq \frac18 \psi w^2 + \frac{C}{T^{2}},
\end{equation}
again by the properties of $\psi$.\\
Finally, we deal with the last term, there holds
\begin{equation}\label{sz14}
2 L w \psi \leq 2 L_+ w \psi \leq \frac18 \psi w^2 + C L_{+}^{2},
\end{equation}
as $\psi\leq 1$.\\
Substituting estimates~\eqref{sz10},~\eqref{sz11},~\eqref{sz12},~\eqref{sz13},~\eqref{sz14} in the right hand side of inequality~\eqref{sz9}, we deduce
$$
2 (1-f) \psi w^2 \leq (1-f) \psi w^2 + C \frac{f^4}{R^4 (1-f)^{3}} + \frac12 \psi w^2 + \frac{C}{R^{4}} + \frac{C}{T^{2}} + C L_{+}^{2}.
$$
Recalling that $f \leq 0$, it follows
$$
\psi w^2 (x_1,t_1) \leq C \frac{f^4}{R^4 (1-f)^{4}} + \frac12 \psi w^2 (x_1,t_1) + \frac{C}{R^{4}} + \frac{C}{T^{2}} + C L_{+}^{2}
$$
and, since $f^4/(1-f)^4 \leq 1$, we conclude that 
$$
\psi^2 (x,t) w^2 (x,t) \leq \psi^2 (x_1,t_1) w^2 (x_1,t_1) \leq  \psi (x_1,t_1) w^2 (x_1,t_1) \leq \frac{C}{R^{4}} + \frac{C}{T^{2}} + C L_{+}^{2},
$$
for all $(x,t)\in Q_{R,T}$.

As $\psi=1$ in $Q_{R/2,T/4}$, $w = |\nabla f|^2/(1-f)^2$ and $L=\bigl(pD^{p-1}-(n-1)K\bigr)$, we finally have
$$
\frac{|\nabla f|}{(1-f)} \leq \frac{C}{R} + \frac{C}{\sqrt{T}} + C \sqrt{\bigl(pD^{p-1}-(n-1)K\bigr)_+}
$$
for every $(x,t)\in Q_{R/2,T/4}$. Since $f = \log (u/D)$, we are done.

The constant $C$ can be effectively traced back and made explicit from estimates~\eqref{sz10} onward: it comes from reiterated applications of Young's inequality with numerical constants, from the properties~3 and~4 of the cut--off functions $\psi$ through the constants $C_a$ and $C$ respectively, and from the constant $2(n-1)$ appearing in the Laplacian comparison theorem used in 
estimate~\eqref{sz12}. So $C(n,p)$ is depending only on the dimension $n$ of the manifold $M$ and on the exponent $p>1$.
\end{proof}

\begin{rem} Notice that if $K>0$, then the manifold is compact, by Bonnet--Myers theorem (see~\cite{gahula}).
\end{rem}

If $u \leq D$ uniformly in $M\times [T_0 - T, T_0]$, then estimate~\eqref{liyausem} holds for every $(x,t)\in M\times [T_0-T/4, T_0]$, when  $R$ is large enough, hence, sending $R\to+\infty$, we get the following corollary.

\begin{cor}\label{szse2}
Let $(M,g)$ be an $n$--dimensional Riemannian manifold such that $\Ric\geq K(n-1)$, for some $K\geq0$. Let $u$ be a positive solution of the semilinear heat equation 
$u_t = \Delta u + |u|^p$ in $M\times [T_0 - T, T_0]$, for $p>1$. Assume that $u \leq D$, then, there exists $C=C(n,p)$ such that 
\begin{equation}\label{liyausem2}
\frac{|\nabla u(x,t)|}{u(x,t)} \leq C \left(\frac{1}{\sqrt{T}} + \sqrt{\bigl(pD^{p-1}-(n-1)K\bigr)_+} \right) \left( 1 + \log \frac{D}{u(x,t)} \right)
\end{equation}
for every $(x,t)$ in $M\times[T_0-T/4,T_0]$.
\end{cor}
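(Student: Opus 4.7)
The plan is to deduce this corollary directly from Lemma~\ref{szsem} by a straightforward limiting argument, taking advantage of the fact that the bound $u \leq D$ is now assumed globally on $M\times[T_0-T,T_0]$, rather than only on a cylinder $Q_{R,T}$.

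Fix an arbitrary point $(x,t)\in M\times[T_0-T/4,T_0]$. For each $R>0$, I would apply Lemma~\ref{szsem} with base point $x_0=x$, so that $x\in B(x_0,R/2)$ trivially and $(x,t)\in Q_{R/2,T/4}$. Since $M$ is complete, the ball $B(x,R)$ is well--defined for every $R$, and the assumption $u\leq D$ on all of $M\times[T_0-T,T_0]$ immediately implies the required bound $u\leq D$ on $Q_{R,T}$. The lemma then yields
$$
\frac{|\nabla u(x,t)|}{u(x,t)} \leq C \left( \frac{1}{R} + \frac{1}{\sqrt{T}} + \sqrt{\bigl(pD^{p-1}-(n-1)K\bigr)_+} \right) \left( 1 + \log \frac{D}{u(x,t)} \right),
$$
with the same constant $C=C(n,p)$, independently of $R$.

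Passing to the limit $R\to+\infty$, the term $1/R$ drops out and the remaining factors are unaffected (they depend only on $n$, $p$, $T$, $D$, $K$ and the pointwise value $u(x,t)$, which are all fixed). This gives~\eqref{liyausem2} at the chosen point $(x,t)$. Since $(x,t)\in M\times[T_0-T/4,T_0]$ was arbitrary, the estimate holds throughout this space--time slab.

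The argument is essentially mechanical once Lemma~\ref{szsem} is in place; I do not foresee any real obstacle. The only mild point worth noting is that the center of the ball in the lemma can be chosen to coincide with the evaluation point $x$, so that $(x,t)\in Q_{R/2,T/4}$ for every $R>0$, which is what allows the $1/R$ factor to be absorbed in the limit while leaving the other terms untouched. Completeness of $(M,g)$, which is a standing assumption of the paper, ensures the geodesic balls $B(x,R)$ exhaust $M$ as $R\to+\infty$, so no further geometric hypothesis is needed.
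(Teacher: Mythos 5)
Your argument is correct and is exactly the paper's own proof: the authors note just before the corollary that when $u\leq D$ holds on all of $M\times[T_0-T,T_0]$, estimate~\eqref{liyausem} applies for every sufficiently large $R$, and letting $R\to+\infty$ kills the $1/R$ term. Your added remark about centering the ball at the evaluation point is a harmless (and correct) way of making that limit precise.
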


\begin{rem}
If $M$ is compact, one can also prove this corollary directly following the proof of Lemma~\ref{szsem}, simply considering functions $\psi$ which are constant in space.
\end{rem}

\begin{cor}\label{szsem4}
Let $(M,g)$ be an $n$--dimensional  Riemannian manifold such that $\Ric \geq K(n-1)$, for some $K\geq 0$ and let $p>1$. Let $u$ be a nonzero ancient solution of the semilinear heat equation $u_t = \Delta u + |u|^p$ in 
$M\times (-\infty, T_0]$ bounded by $D>0$. Then, there exists a constant $C=C(n,p)$ such that for every $(x,t)\in M\times (-\infty, T_0]$ there holds
\begin{equation}\label{liyausem5}
\frac{|\nabla u(x,t)|}{u(x,t)} \leq C \sqrt{\bigl(pD^{p-1}-(n-1)K\bigr)_+} \left( 1 + \log \frac{D}{u(x,t)} \right),
\end{equation}
where $\bigl(pD^{p-1}-(n-1)K\bigr)_+$ denotes $\max\,\bigl\{pD^{p-1}-(n-1)K,0\bigr\}$.
\end{cor}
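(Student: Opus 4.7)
The plan is to deduce this corollary directly from Corollary~\ref{szse2} by exploiting the fact that an ancient solution is defined on time intervals of arbitrarily large length, and then sending the time length to infinity to eliminate the $\frac{1}{\sqrt{T}}$ term in estimate~\eqref{liyausem2}.

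First I would address positivity: since $u$ is a nonzero ancient solution on a manifold whose Ricci tensor is bounded below by $K(n-1) \geq 0$, Theorem~\ref{p3} applies and yields $u > 0$ everywhere on $M\times(-\infty,T_0]$. This guarantees that the hypothesis of Corollary~\ref{szse2} is met and that the quotient $|\nabla u|/u$ and the logarithm $\log(D/u)$ make sense at every point.

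Next, for each $T>0$, the restriction of $u$ to $M\times[T_0-T,T_0]$ is a positive solution of $u_t=\Delta u + |u|^p$ satisfying the uniform bound $u\leq D$ inherited from the global bound. Applying Corollary~\ref{szse2} to this restriction with the given $K\geq 0$ gives, for every $(x,t)\in M\times[T_0-T/4,T_0]$,
\begin{equation*}
\frac{|\nabla u(x,t)|}{u(x,t)} \leq C \left(\frac{1}{\sqrt{T}} + \sqrt{\bigl(pD^{p-1}-(n-1)K\bigr)_+}\right)\left(1+\log\frac{D}{u(x,t)}\right),
\end{equation*}
where $C=C(n,p)$ is independent of $T$.

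Finally, I would fix an arbitrary point $(x,t)\in M\times(-\infty,T_0]$. Whenever $T\geq 4(T_0-t)$, the point $(x,t)$ lies in $M\times[T_0-T/4,T_0]$, so the inequality above applies, with a left-hand side independent of $T$. Letting $T\to+\infty$, the term $1/\sqrt{T}$ vanishes and the desired estimate~\eqref{liyausem5} follows. There is no real obstacle: the only subtlety is verifying that for every $(x,t)$ one can make $T$ large enough to place $(x,t)$ inside the domain of validity $M\times[T_0-T/4,T_0]$ of Corollary~\ref{szse2}, which is precisely what the ancient character of $u$ provides.
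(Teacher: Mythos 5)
Your proof is correct and follows essentially the same route as the paper: positivity via Theorem~\ref{p3}, then applying the gradient estimate on time intervals $[T_0-T,T_0]$ of arbitrary length and letting $T\to+\infty$ to kill the $1/\sqrt{T}$ term. The only cosmetic difference is that you pass through Corollary~\ref{szse2} (where $R\to+\infty$ has already been taken) rather than sending $R$ and $T$ to infinity simultaneously in Lemma~\ref{szsem}, which is immaterial.
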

\begin{proof}
By Theorem~\ref{p3}, we know that if $u$ is nonzero then $u$ is necessarily positive. Then, since estimate~\eqref{liyausem} holds for every $(x,t)\in M\times (-\infty, T_0]$, for $R$ and $T$ large enough, sending $R,T\to+\infty$, we get the conclusion.
\end{proof}

Following Pol\'a\v cik--Quittner--Souplet~\cite{pqs2} for the Euclidean case, we now prove a universal a priori estimate for ancient solutions of $u_t = \Delta u + |u|^p$, with $1< p < \frac{n(n+2)}{(n-1)^2}$  on an $n$--dimensional Riemannian manifold $(M,g)$ with {\em bounded geometry} (injectivity radius positively uniformly bounded below and uniformly bounded Riemann tensor with all its covariant derivatives), for instance when $M$ is compact. As a corollary, such ancient solutions decay to zero at minus infinity.

\begin{prop}\label{universal2}
Let $(M,g)$ be an $n$--dimensional Riemannian manifold with bounded geometry. Let $u$ be a uniformly bounded below solution of the equation $u_t = \Delta u + |u|^p$ in $M\times (T_0,T)$, with $1< p < \frac{n(n+2)}{(n-1)^2}$. Then there exists a universal constant $C = C (n,p)$ such that there holds
\begin{equation}\label{univest}
u(x,t) + |\nabla u(x,t)|^{\frac{2}{p+1}} \leq C\left[ |t-T_0|^{-\frac{1}{p-1}}+|T-t|^{-\frac{1}{p-1}}\right]\,, 
\end{equation}
for every $(x,t) \in M\times (T_0,T)$.
\end{prop}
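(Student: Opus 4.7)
The plan is to adapt the rescaling-plus-doubling argument of Pol\'a\v cik--Quittner--Souplet from the Euclidean case to the Riemannian setting, reducing the problem, via bounded geometry, to a Liouville theorem on $\R^n\times\R$. Introduce the scale function
$$
M(x,t):=\bigl(u(x,t)+|\nabla u(x,t)|^{2/(p+1)}\bigr)^{(p-1)/2}\,,
$$
which has the dimension of an inverse length under the natural parabolic scaling $u\mapsto u_\lambda(x,t)=\lambda^{2/(p-1)}u(\lambda x,\lambda^2 t)$. Up to multiplicative constants, estimate~\eqref{univest} is equivalent to the bound $M(x,t)\cdot\min\bigl\{|t-T_0|^{1/2},|T-t|^{1/2}\bigr\}\leq C$.

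I would argue by contradiction. If the estimate failed, one could produce a sequence of ambient data (solutions $u_k$ on $M\times(T_0^k,T^k)$) and points $(y_k,s_k)$ such that $M_k(y_k,s_k)\cdot\min\{|s_k-T_0^k|^{1/2},|T^k-s_k|^{1/2}\}\to+\infty$. The parabolic Hu/Pol\'a\v cik--Quittner--Souplet doubling lemma, applied with the parabolic pseudo-distance $\max\{d_g(x,y),|t-s|^{1/2}\}$ and singular set $M\times\{T_0^k,T^k\}$, then yields a new sequence $(x_k,t_k)$ with $\lambda_k:=1/M_k(x_k,t_k)\to 0$ and
$$
M_k(x,t)\leq 2\,M_k(x_k,t_k)
$$
for every $(x,t)$ whose parabolic distance from $(x_k,t_k)$ is at most $k\lambda_k$, with $k\to+\infty$.

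Now I rescale on the manifold. Bounded geometry provides a uniform lower bound $r_0>0$ on the injectivity radius together with uniform curvature bounds, so exponential normal coordinates are available on $B_{r_0}(x_k)$ and, in these coordinates, the metric $g$ evaluated at scale $\lambda_k$ converges to the Euclidean metric in $C^{\infty}_{\mathrm{loc}}(\R^n)$ as $\lambda_k\to 0$. Setting
$$
v_k(y,s):=\lambda_k^{2/(p-1)}\,u_k\bigl(\exp_{x_k}(\lambda_k y),\,t_k+\lambda_k^2 s\bigr)\,,
$$
each $v_k$ solves a semilinear heat equation whose leading operator is the Laplace--Beltrami operator of the rescaled metric $g_k$; moreover $v_k(0,0)+|\nabla v_k(0,0)|^{2/(p+1)}=1$, $v_k$ is uniformly bounded on every compact subset of $\R^n\times\R$ by the doubling control, and $v_k$ is bounded below by a sequence tending to zero (the uniform lower bound on $u_k$ is multiplied by $\lambda_k^{2/(p-1)}\to 0$). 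Parabolic Schauder estimates, together with the convergence $g_k\to g_{\mathrm{eucl}}$, give a subsequential $C^{2,\alpha}_{\mathrm{loc}}$ limit $v\colon\R^n\times\R\to\R$, which is a nonnegative bounded entire solution of the Euclidean equation $v_t=\Delta v+v^p$ satisfying $v(0,0)+|\nabla v(0,0)|^{2/(p+1)}=1$.

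The proof is then closed by invoking the Pol\'a\v cik--Quittner--Souplet Liouville theorem~\cite{pqs1,pqs2}: precisely in the range $1<p<\frac{n(n+2)}{(n-1)^2}$, every nonnegative bounded eternal solution of $v_t=\Delta v+v^p$ on $\R^n\times\R$ vanishes identically, contradicting the normalization at the origin. I expect the main technical obstacle to be the rescaling step on the manifold: one has to verify that the Christoffel symbols and curvature terms appearing in the pulled-back equation are $O(\lambda_k^2)$ in $C^{\infty}_{\mathrm{loc}}$, that the doubling bound on $M$ translates into uniform bounds on $v_k$ and $|\nabla v_k|$, and that the parabolic regularity for the variable-coefficient operator $\partial_s-\Delta_{g_k}$ yields enough compactness to extract a limit defined on \emph{all} of $\R^n\times\R$. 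The threshold $p<\frac{n(n+2)}{(n-1)^2}$ is dictated exactly by the Euclidean Liouville theorem.
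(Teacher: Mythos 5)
Your proposal is correct and follows essentially the same route as the paper, which itself only sketches the argument by referring to the blow--up/doubling--lemma proof of Theorem~3.1(ii) in Pol\'a\v cik--Quittner--Souplet and noting that under bounded geometry the dilated manifolds converge in the Cheeger--Gromov sense to flat $\R^n$, reducing matters to the Euclidean Liouville theorem. In fact, your write--up supplies more detail (the scale function, the doubling lemma with the parabolic pseudo--distance, the rescaling in normal coordinates and the Schauder compactness) than the paper's own sketch does.
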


\begin{proof}[Proof (Sketch).] One can argue precisely in the same way as in the proof of Theorem~3.1 (ii) in~\cite{pqs2}, concerning the case $M=\R^n$. In such paper, assuming the conclusion fails, by a 
blow--up argument the authors can produce an eternal, nonzero, bounded and positive (classical) solution of equation $u_t = \Delta u + u^p$ in the whole $\R^n$, which is known it does not exist (see Theorem~A in the same paper). In our case of a manifold, the only difference in taking such blow--up (which is given by an appropriate ``rescaling'' of the solution) is that we have to ``rescale'' (dilate) also the ambient space $(M,g)$. Anyway, by the hypothesis of bounded Riemann tensor and injectivity radius uniformly bounded below, the dilated manifolds (together with the rescaled functions defined on them) smoothly converge, up to a subsequence, in the sense of Cheeger--Gromov (see~\cite{petersen1}, for instance) to $\R^n$ with its canonical flat metric. Hence, as for the case $M=\R^n$ discussed in~\cite{pqs2}, we obtain a nonnegative solution $u_t = \Delta u + u^p$ in the whole $\R^n$, with the same properties as above, which is a contradiction.
\end{proof}

As a corollary, sending $T_0\to-\infty$, we have the following decay estimate for ancient solutions, which must be nonnegative (hence bounded below) by Theorem~\ref{p3}.

\begin{cor}\label{decay}
Let $(M,g)$ be an $n$--dimensional Riemannian manifold with bounded geometry. Let $u$ be an ancient solution of the equation $u_t = \Delta u + |u|^p$ in $M\times (-\infty,T)$, with $1< p <  \frac{n(n+2)}{(n-1)^2}$, for some $T\in\R$. Then there exists a universal constant $C = C (n,p)$ such that
\begin{equation}\label{decayest}
u(x,t) \leq C (T-t)^{-\frac{1}{p-1}} \; \text{ for any } (x,t) \in M\times (-\infty,T).
\end{equation}
In particular 
$$
\lim_{t\to-\infty}\max_{x\in M}u(x,t)=0.
$$
\end{cor}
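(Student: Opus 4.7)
The plan is to derive \eqref{decayest} as a direct consequence of Proposition~\ref{universal2} by applying it on finite time slabs $M\times(T_0,T)$ and then sending $T_0\to-\infty$. Two small ingredients must be checked: that the ancient solution $u$ satisfies the uniform lower-boundedness hypothesis required by that proposition, and that the constant in the universal estimate does not degrade as $T_0\to-\infty$.

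For the first point, I would note that bounded geometry of $(M,g)$ implies, in particular, that the Ricci tensor is uniformly bounded below. Hence Theorem~\ref{p3} applies to $u$, producing the dichotomy: either $u\equiv 0$ (in which case \eqref{decayest} is trivial) or $u>0$ everywhere on $M\times(-\infty,T)$. In both cases $u\geq 0$, so the uniform lower-boundedness hypothesis of Proposition~\ref{universal2} is satisfied on every such slab. Then, for each fixed $(x,t)\in M\times(-\infty,T)$ and each $T_0<t$, Proposition~\ref{universal2} yields
$$
u(x,t)\leq C\left[|t-T_0|^{-\frac{1}{p-1}}+(T-t)^{-\frac{1}{p-1}}\right],
$$
with $C=C(n,p)$ independent of $T_0$. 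Letting $T_0\to-\infty$ makes the first term vanish and delivers \eqref{decayest}. The concluding limit then follows because the right-hand side of \eqref{decayest} is independent of $x$ and tends to zero as $t\to-\infty$, so taking the supremum in $x$ and passing to the limit gives $\max_{x\in M} u(x,t)\to 0$.

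There is essentially no substantial obstacle here, since the heavy lifting has already been carried out in Proposition~\ref{universal2} (via the Pol\'a\v cik--Quittner--Souplet blow-up argument) and in the positivity result Theorem~\ref{p3}. The one conceptual point worth emphasising is the \emph{universality} of the constant in Proposition~\ref{universal2}: it depends only on $n$ and $p$, and this independence from $T_0$ is precisely what permits the limit $T_0\to-\infty$ to be taken without any deterioration of the estimate.
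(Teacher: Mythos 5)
Your proposal is correct and follows essentially the same route as the paper: the paper likewise observes that ancient solutions are nonnegative by Theorem~\ref{p3} (bounded geometry giving the required lower Ricci bound), applies Proposition~\ref{universal2} on slabs $M\times(T_0,T)$, and sends $T_0\to-\infty$ using that $C=C(n,p)$ is independent of $T_0$. Your additional remarks on verifying the lower-boundedness hypothesis and the universality of the constant are exactly the points the paper leaves implicit.
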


If the solution is actually eternal, we can also send $T\to+\infty$, concluding that $u\equiv0$.

\begin{cor}\label{eter}
Let $(M,g)$ be a $n$--dimensional Riemannian manifold with bounded geometry. Every eternal solution of the equation $u_t = \Delta u + |u|^p$ in $M\times\R$, with $1< p <  \frac{n(n+2)}{(n-1)^2}$, is identically zero.
\end{cor}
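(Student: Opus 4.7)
The plan is to observe that an eternal solution is in particular ancient, so all the machinery of Theorem~\ref{p3} and Corollary~\ref{decay} applies. Concretely, since $(M,g)$ has bounded geometry its Ricci tensor is uniformly bounded below, and an eternal $u$ is defined on $M\times(-\infty,T)$ for \emph{every} $T\in\R$; so Theorem~\ref{p3} yields the dichotomy $u\equiv 0$ or $u>0$ everywhere, and Corollary~\ref{decay} supplies the bound
\[
u(x,t)\leq C(T-t)^{-\frac{1}{p-1}}\qquad\text{for every }(x,t)\in M\times(-\infty,T).
\]
The key point is that the constant $C=C(n,p)$ is universal, in particular it does not depend on the choice of $T$.

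From here the argument is immediate. I would fix an arbitrary $(x_0,t_0)\in M\times\R$ and apply the decay estimate with an arbitrary $T>t_0$, obtaining $u(x_0,t_0)\leq C(T-t_0)^{-1/(p-1)}$. Letting $T\to+\infty$, which is permissible precisely because $u$ is eternal and therefore ancient on $M\times(-\infty,T)$ for each such $T$, one concludes $u(x_0,t_0)\leq 0$. Since $(x_0,t_0)$ was arbitrary, $u\leq 0$ on $M\times\R$.

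Combined with the dichotomy from Theorem~\ref{p3}, the only remaining possibility is $u\equiv 0$: indeed, the positive alternative $u>0$ everywhere is incompatible with $u\leq 0$. This gives the conclusion. I do not expect any serious obstacle: the substantive work (positivity, universal bound, decay) has already been carried out in Theorem~\ref{p3}, Proposition~\ref{universal2} and Corollary~\ref{decay}, and the present corollary is just the ``let $T\to+\infty$'' companion to the ``let $T_0\to-\infty$'' passage performed in Corollary~\ref{decay}.
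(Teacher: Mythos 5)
Your proposal is correct and follows exactly the paper's (one-line) argument: the universal constant in Proposition~\ref{universal2}/Corollary~\ref{decay} is independent of $T$, so sending $T\to+\infty$ forces $u\leq 0$, which together with the nonnegativity from Theorem~\ref{p3} gives $u\equiv 0$. No issues.
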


\begin{rem} The conclusion of Proposition~\ref{universal2}, hence also of Corollaries~\ref{decay} and~\ref{eter}, is conjectured to hold for the whole range of exponents $p\in\bigl(1, \frac{n+2}{n-2}\bigr)$, if $n\geq 3$, or $p\in(1, +\infty)$, when $n=1,2$. This would follow by the nonexistence of eternal, positive, classical solutions of equation $u_t = \Delta u + u^p$ in the whole $\R^n$, which at the moment can be shown in general only when $p < \frac{n(n+2)}{(n-1)^2}$ and assuming that such solutions are {\em radial} when $p\in\bigl[\frac{n(n+2)}{(n-1)^2},\frac{n+2}{n-2}\bigr)$ (see the discussion in~\cite[Section~2]{pqs2} and the references therein).
\end{rem}

\section{Triviality}\label{triviality}

We can now prove a triviality result and, consequently, Theorem~\ref{main} stated in the introduction.

\begin{thm}\label{liouanc}
Let $(M,g)$ be an $n$--dimensional compact Riemannian manifold such that $\Ric>0$. Let $u$ be an ancient solution to the semilinear heat equation $u_t = \Delta u + |u|^p$, with $p>1$, such that 
\begin{equation}\label{grow}
\lim_{t\to-\infty}\max_{x\in M}u(x,t) = 0,
\end{equation}
then $u$ is trivial.
\end{thm}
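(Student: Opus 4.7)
The plan is to combine the gradient estimate of Corollary~\ref{szsem4} with the decay hypothesis to show that, on a long enough initial interval, the spatial gradient of $u$ must vanish identically, and then to conclude by uniqueness for the Cauchy problem.

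First I would dispose of the trivial case: by the positivity Theorem~\ref{p3} (whose hypothesis is satisfied since $M$ is compact with $\Ric>0$, in particular $\Ric$ is bounded below), either $u\equiv 0$ (and we are done) or $u>0$ everywhere. So assume $u>0$. Since $M$ is compact and $\Ric>0$, by continuity there is a constant $K>0$ with $\Ric\geq K(n-1)$.

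Next, I would exploit the decay assumption~\eqref{grow} to kill the positive part appearing in the gradient estimate. Choose $\varepsilon>0$ so small that $p\varepsilon^{p-1}\leq (n-1)K$. By~\eqref{grow}, there exists $T_\varepsilon\in\R$ such that $u(x,t)\leq\varepsilon$ for every $(x,t)\in M\times(-\infty,T_\varepsilon]$. I now apply Corollary~\ref{szsem4} on $M\times(-\infty,T_\varepsilon]$ with $D=\varepsilon$: because the above choice of $\varepsilon$ forces $\bigl(pD^{p-1}-(n-1)K\bigr)_+ = 0$, estimate~\eqref{liyausem5} degenerates to
\begin{equation*}
\frac{|\nabla u(x,t)|}{u(x,t)}\leq 0\qquad\text{for every }(x,t)\in M\times(-\infty,T_\varepsilon],
\end{equation*}
so $\nabla u\equiv 0$ on $M\times(-\infty,T_\varepsilon]$. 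Hence $u(\cdot,t)$ is spatially constant for every $t\leq T_\varepsilon$, and in particular $u(\cdot,T_\varepsilon)\equiv v_0$ for some $v_0\geq 0$.

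Finally, I would propagate this forward in time by the uniqueness of classical solutions of the semilinear Cauchy problem. Let $\bar u:[T_\varepsilon,T)\to\R$ solve the ODE $\bar u'=|\bar u|^p$ with $\bar u(T_\varepsilon)=v_0$; this gives a spatially constant classical solution of $u_t=\Delta u+|u|^p$ on $M\times[T_\varepsilon,T)$ agreeing with $u$ at $t=T_\varepsilon$. Standard uniqueness for classical bounded solutions of this semilinear heat equation on each compact subinterval of $[T_\varepsilon,T)$ (applied successively) forces $u\equiv\bar u$ on $M\times[T_\varepsilon,T)$. Together with the fact that $u$ is spatially constant on $M\times(-\infty,T_\varepsilon]$, this shows $u$ is spatially constant on its entire domain, i.e.\ trivial.

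The whole argument is short; the only mildly delicate point is making sure the decay hypothesis is used at the right scale, namely to push $D$ below the threshold $((n-1)K/p)^{1/(p-1)}$ coming from the gradient estimate. After that, Corollary~\ref{szsem4} does all the work.
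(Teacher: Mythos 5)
Your proposal is correct and follows essentially the same route as the paper: use Theorem~\ref{p3} for positivity, use hypothesis~\eqref{grow} to push the bound $D$ below the threshold $\bigl[(n-1)K/p\bigr]^{\frac{1}{p-1}}$ so that the right-hand side of estimate~\eqref{liyausem5} vanishes, conclude $\nabla u\equiv 0$ on a backward half-line in time, and propagate spatial constancy forward by uniqueness. Your forward-propagation step via the ODE is just a slightly more explicit version of the paper's appeal to uniqueness on the compact manifold.
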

\begin{proof} As the Ricci tensor is positive and $M$ compact, $\Ric\geq (n-1)Kg$, for some $K>0$. By Theorem~\ref{p3} we then know that, if $u$ is nonzero, then $u$ is necessarily positive. Under the above growth hypothesis, there exists $T_0\in\R$ such that 
$$
0<u(x,t)\leq\bigl[(n-1)K/p\bigr]^{\frac{1}{p-1}}
$$
for every $x\in M$ and $t\leq T_0$, hence, by the estimate~\eqref{liyausem5}, we get (with a constant $C$ depending only on the dimension $n$ of the manifold $M$ and on the exponent $p>1$),
\begin{equation*}
\frac{|\nabla u(x,t)|}{u(x,t)} \leq C \sqrt{(pD^{p-1}-(n-1)K)_+} \left( 1 + \log \frac{D}{u(x,t)} \right)=0,
\end{equation*}
for every $(x,t)\in M\times(-\infty,T_0]$, as $D = \max_{M \times (-\infty,T_0]} u(x,t)\leq \bigl[(n-1)K/p\bigr]^{\frac{1}{p-1}}$.\\
Being $|\nabla u(x,t)| = 0$ for every $(x,t)\in M\times(-\infty, T_0]$, the function $u$ is constant in space for every $t\leq T_0$. By uniqueness of solutions ($M$ is compact), $u$ is trivial.
\end{proof}

\begin{rem}
The hypothesis~\eqref{grow} can be slightly weakened as follows. If $\Ric\geq (n-1)Kg$, it is sufficient that 
\begin{equation*}
\limsup_{t\to-\infty}\max_{x\in M}u(x,t)<\bigl[(n-1)K/p\bigr]^{\frac{1}{p-1}}.
\end{equation*}
\end{rem}

\begin{proof}[Proof of Theorem~\ref{main}]
The conclusion follows immediately by putting together Theorem~\ref{liouanc} with the decay estimate of Corollary~\ref{decay}.
\end{proof}

\bibliographystyle{amsplain}
\bibliography{biblio}

\providecommand{\bysame}{\leavevmode\hbox to3em{\hrulefill}\thinspace}
\providecommand{\MR}{\relax\ifhmode\unskip\space\fi MR }
% \MRhref is called by the amsart/book/proc definition of \MR.
\providecommand{\MRhref}[2]{%
  \href{http://www.ams.org/mathscinet-getitem?mr=#1}{#2}
}
\providecommand{\href}[2]{#2}
\begin{thebibliography}{10}

\bibitem{aubinli}
T.~Aubin and Y.-Y. Li, \emph{On the best {S}obolev inequality}, J. Math. Pures
  Appl. (9) \textbf{78} (1999), no.~4, 353--387.

\bibitem{CaMa1}
D.~Castorina and C.~Mantegazza, \emph{Ancient solutions of semilinear heat
  equations on {R}iemannian manifolds}, Atti Accad. Naz. Lincei Rend. Lincei
  Mat. Appl. \textbf{28} (2017), no.~1, 85--101.

\bibitem{chen2}
B.-L. Chen, \emph{Strong uniqueness of the {R}icci flow}, J. Diff. Geom.
  \textbf{82} (2009), 363--382.

\bibitem{pqs1}
P.~Pol\'a\v cik, P.~Quittner, and P.~Souplet, \emph{Singularity and decay
  estimates in superlinear problems via {L}iouville--type theorems. {I}.
  {E}lliptic equations and systems}, Duke Math. J. \textbf{139} (2007), no.~3,
  555--579.

\bibitem{pqs2}
\bysame, \emph{Singularity and decay estimates in superlinear problems via
  {L}iouville--type theorems. {II}. {P}arabolic equations}, Indiana Univ. Math.
  J. \textbf{56} (2007), no.~2, 879--908.

\bibitem{crisli1}
M.~G. Crandall, H.~Ishii, and P.-L. Lions, \emph{User's guide to viscosity
  solutions of second order partial differential equations}, Bull. Amer. Math.
  Soc. \textbf{27/1} (1992), 1--67.

\bibitem{gahula}
S.~Gallot, D.~Hulin, and J.~Lafontaine, \emph{{R}iemannian geometry},
  Springer--Verlag, 1990.

\bibitem{hamilton2}
R.~S. Hamilton, \emph{Four--manifolds with positive curvature operator}, J.
  Diff. Geom. \textbf{24} (1986), no.~2, 153--179.

\bibitem{liyau}
P.~Li and S.-T. Yau, \emph{On the parabolic kernel of the {S}chr\"odinger
  operator}, Acta Math. \textbf{156} (1986), no.~3--4, 153--201.

\bibitem{Manlib}
C.~Mantegazza, \emph{Lecture notes on mean curvature flow}, Progress in
  Mathematics, vol. 290, Birkh\"auser/Springer Basel AG, Basel, 2011.

\bibitem{manmasura}
C.~Mantegazza, G.~Mascellani, and G.~Uraltsev, \emph{On the distributional
  {H}essian of the distance function}, Pacific J. Math. \textbf{270} (2014),
  no.~1, 151--166.

\bibitem{merlezaag}
F.~Merle and H.~Zaag, \emph{Optimal estimates for blowup rate and behavior for
  nonlinear heat equations}, Comm. Pure Appl. Math. \textbf{51} (1998), no.~2,
  139--196.

\bibitem{petersen1}
P.~Petersen, \emph{{R}iemannian geometry}, second ed., Graduate Texts in
  Mathematics, vol. 171, Springer, New York, 2006.

\bibitem{quittner}
P.~Quittner, \emph{Liouville theorems for scaling invariant superlinear
  parabolic problems with gradient structure}, Math. Ann. \textbf{364} (2016),
  no.~1-2, 269--292.

\bibitem{sheng1}
W.~Sheng, \emph{Laplacian, volume and {H}essian comparison theorems},
  http:/$\!\!$/$\!$www.math.zju.edu.cn/swm/RG\_Section\_6.pdf, 2009.

\bibitem{souzha}
P.~Souplet and Q.~S. Zhang, \emph{Sharp gradient estimate and {Y}au's
  {L}iouville theorem for the heat equation on noncompact manifolds}, Bull.
  London Math. Soc. \textbf{38} (2006), no.~6, 1045--1053.

\bibitem{tal}
G.~Talenti, \emph{Best constant in {S}obolev inequality}, Ann. Mat. Pura Appl.
  (4) \textbf{110} (1976), 353--372.

\end{thebibliography}

\end{document}